\def\R{\mathbb R}
\def\H{\mathbb H}
\def\N{\mathbb N}
\def\h2{\mathrm{area}}
\def\ar{|\Sigma_0|}
\def\a_t{\left\lvert\Sigma_t\right\rvert}
\def\b_i{\left\lvert\Sigma_i\right\rvert}
\def\d{\mathrm{div}}
\newcommand{\fint}{\mathop{\int\makebox(-15,2){\rule[4pt]{.9em}{0.6pt}}\kern-4pt}\nolimits}
\def\tf {\mathring{A}}
\def\n{\lvert\mathring{A}\rvert}
\def\dr{\lvert\partial_r^{\top}\rvert}
\def\dw_i{\lvert\widehat{\nabla} w_t\rvert}
\def\ah {asymptotically hyperbolic }
\def\imc{inverse mean curvature }
\def\tr{\mathrm{tr}}
\newtheorem*{conj}{Conjecture}
\newtheorem{thm}{Theorem}[section]
\newtheorem{lemm}[thm]{Lemma}
\newtheorem{prop}[thm]{Proposition}
\theoremstyle{remark}
\theoremstyle{definition}
\newtheorem{defi}[thm]{Definition}
\title{Insufficient convergence of inverse mean curvature flow on asymptotically hyperbolic manifolds}
\author{{Andr\'e Neves} ${}^{\dagger}$}
\email{aneves@math.princeton.edu}
\address{Fine Hall, Princeton University,
Princeton, NJ 08544, USA}
\thanks{\quad\ ${}^{\dagger}$\ The author was partially supported by NSF grant DMS-06-04164.}
\begin{document}

\maketitle \markboth{Insufficient convergence of inverse mean curvature flow on ...} { Andr\'e Neves} \maketitle

\begin{abstract}
We construct a solution to inverse mean curvature flow  on an asymptotically hyperbolic $3$-manifold which does not have the  convergence properties  needed in order to prove a Penrose--type inequality.  This contrasts sharply with the asymptotically flat case. The main idea consists in combining inverse mean curvature flow with work done by Shi--Tam regarding boundary behavior of compact manifolds. Assuming  the Penrose inequality holds, we also derive a nontrivial inequality for functions on $S^2$.
\end{abstract}

\section{Introduction}
A Penrose inequality for asymptotically flat $3$-manifolds was proven independently by Huisken-Ilmanen \cite{Huisken2}, using inverse mean curvature flow, and  Hugh Bray \cite{bray}, using a conformal deformation of the ambient metric. Recently, Hugh Bray and Dan Lee \cite{bray} extended Bray's approach  and prove a Penrose inequality for dimensions less than 8.
 Arguing by analogy with the asymptotically flat case, the following conjecture was stated by Xiadong Wang in \cite{wang}. Let $(M,g)$ is an asymptotically hyperbolic $3$-manifold with $R\geq-6$ and mass $M$ (see next subsection for definitions). 

\begin{conj}   If $\Sigma_0$ is an outermost sphere with $H(\Sigma_0)=2$, then
$$M\geq \left(\frac{|\Sigma_0|}{16\pi}\right)^{1/2}.$$
If equality holds then $(M,g)$ is isometric to an Anti--de Sitter--Schwarzschild manifold outside $\Sigma_0$.
\end{conj}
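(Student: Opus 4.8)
\medskip
\noindent\emph{A strategy for the proof, and the obstacle it meets.}
The natural approach is to transplant the Huisken--Ilmanen proof of the asymptotically flat Penrose inequality \cite{Huisken2}: run weak inverse mean curvature flow from $\Sigma_0$ and monitor a monotone Geroch--Hawking quantity whose values at the two ends of the flow recover the two sides of the inequality. For a closed surface $\Sigma\subset M$ the relevant quantity is the modified Hawking mass
\[
m_H(\Sigma)=\frac{|\Sigma|^{1/2}}{(16\pi)^{3/2}}\left(16\pi-\int_\Sigma\bigl(H^2-4\bigr)\,d\mu\right),
\]
tailored to the normalization $R\ge-6$: it vanishes on the geodesic spheres of $\mathbb{H}^3$, it equals the mass on every coordinate sphere of an Anti--de Sitter--Schwarzschild manifold, and, since $H\equiv2$ on $\Sigma_0$, it satisfies $m_H(\Sigma_0)=\bigl(|\Sigma_0|/16\pi\bigr)^{1/2}$, exactly the right-hand side of the conjectured inequality. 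Writing $\{\Sigma_t\}_{t\ge0}$ for the weak flow from $\Sigma_0$, so that $|\Sigma_t|=e^{t}|\Sigma_0|$, the proof reduces to: (i) $t\mapsto m_H(\Sigma_t)$ is nondecreasing; and (ii) $m_H(\Sigma_t)\to M$ as $t\to\infty$. Together these give $M\ge m_H(\Sigma_0)=(|\Sigma_0|/16\pi)^{1/2}$.

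Step (i) is the Geroch computation. Along smooth flow one differentiates $\int_{\Sigma_t}(H^2-4)\,d\mu$ using $\partial_tH=-\Delta H^{-1}-\bigl(|A|^2+\mathrm{Ric}(\nu,\nu)\bigr)H^{-1}$ and $\partial_t\,d\mu=d\mu$, integrates the Laplacian term by parts to create the term $2|\nabla H|^2/H^2$, writes $|A|^2=|\tf|^2+H^2/2$, eliminates $\mathrm{Ric}(\nu,\nu)$ in favour of $R$ and the Gauss curvature via the Gauss equation, and applies Gauss--Bonnet $\int_{\Sigma_t}K\,d\mu=2\pi\chi(\Sigma_t)\le4\pi$. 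The constant ``$-4$'' together with the term $4|\Sigma_t|$ implicit in $m_H$ conspire to convert the borderline term into $R+6$, giving
\[
\frac{d}{dt}\,m_H(\Sigma_t)=\frac{|\Sigma_t|^{1/2}}{(16\pi)^{3/2}}\left(8\pi-4\pi\chi(\Sigma_t)+\int_{\Sigma_t}\Big(\frac{2|\nabla H|^2}{H^2}+|\tf|^2+R+6\Big)\,d\mu\right)\ge0,
\]
nonnegative because $\Sigma_t$ is connected ($\chi\le2$, which can be arranged for the weak flow as in \cite{Huisken2}) and $R\ge-6$. One must also verify, as in \cite{Huisken2}, that $m_H$ does not drop across the jumps of the weak flow, where $\Sigma_t$ is replaced by the boundary of its outward minimizing hull; the extra constant ``$-4$'' makes this a little more delicate than in the asymptotically flat case, but it is a secondary point. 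Since $m_H(\Sigma_0)=(|\Sigma_0|/16\pi)^{1/2}$ is immediate from $H\equiv2$, step (i) already yields $\lim_{t\to\infty}m_H(\Sigma_t)\ge(|\Sigma_0|/16\pi)^{1/2}$, and the whole difficulty is to identify that limit with $M$.

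Step (ii) is therefore the crux, and where I expect the argument to run aground. In the asymptotically flat case Huisken--Ilmanen use the boundedness of the Hawking mass along the flow, together with a Willmore-type estimate, to prove that the large flow surfaces become asymptotically round, and then compute that the limiting Hawking mass equals the ADM mass. Here the flow still exhausts $M$, but the area grows like $e^{t}$, so the surfaces $\Sigma_t$ escape to infinity, and the mass $M$ sits in the subleading, $O(|\Sigma_t|^{-1/2})=O(e^{-t/2})$, term of the expansion of $\int_{\Sigma_t}(H^2-4)\,d\mu$; to read it off one needs $\Sigma_t$ not merely to round up but to converge to coordinate spheres at a rate fast enough to control that subleading term. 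Anticipating the rest of this paper, this is exactly what can fail: there exist asymptotically hyperbolic $3$--manifolds --- obtained from the Shi--Tam fill-in construction --- on which inverse mean curvature flow exists smoothly for all time and the surfaces do become asymptotically round, yet $\lim_{t\to\infty}m_H(\Sigma_t)$ is strictly smaller than $M$, so the flow fails to detect the full mass and this line of proof collapses. (One could instead try Bray's conformal flow \cite{bray}, whose behaviour at spatial infinity is better controlled.) Finally, were step (ii) available, rigidity would follow in the usual way: equality forces $m_H(\Sigma_t)$ to be constant, hence $\tf\equiv0$, $\nabla H\equiv0$, $R\equiv-6$ and $\chi(\Sigma_t)\equiv2$ along the flow, so the region swept out is foliated by totally umbilic round spheres of constant mean curvature in a metric of constant scalar curvature $-6$; this identifies it with an Anti--de Sitter--Schwarzschild manifold, and outermostness of $\Sigma_0$ disposes of the complement.
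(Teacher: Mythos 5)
You have correctly recognized that the statement is a \emph{conjecture}, not a theorem; the paper gives no proof of it, and its whole point (Theorem \ref{main}) is to show that the inverse mean curvature flow strategy you sketch cannot be carried to completion in the asymptotically hyperbolic setting. Your outline of the Geroch monotonicity step (i) is essentially right, and you correctly locate the crux at step (ii): identifying $\lim_{t\to\infty} m_H(\Sigma_t)$ with the mass $M$. That much matches the paper's discussion in the introduction.

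However, your description of \emph{how} step (ii) fails is wrong on two counts, and the second one is a genuine logical error. First, you assert that in the paper's example ``the surfaces do become asymptotically round.'' They do not: Lemma \ref{mao} shows that the Gaussian curvature of the normalized metrics $\hat g_t$ does \emph{not} converge to $1$, and Theorem \ref{main} asserts that $\liminf_{t\to\infty}(\overline s_t-\underline s_t)>0$ in \emph{every} coordinate system — the surfaces fail to settle into coordinate spheres precisely because they fail to round up. Second, you claim that the flow ``fails to detect the full mass'' in the sense that $\lim_{t\to\infty}m_H(\Sigma_t)$ is strictly \emph{smaller} than $M$. This has the inequality backwards, and in the direction you state it would not be an obstruction at all: if $\lim m_H \le M$, then monotonicity already gives $M \ge \lim m_H \ge m_H(\Sigma_0)=(|\Sigma_0|/16\pi)^{1/2}$ and the Penrose inequality follows. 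What actually kills the argument, and what the paper constructs, is a smooth IMCF solution for which the Hawking mass limit is strictly \emph{larger} than the mass (the paper shows $m/2 < m_H(\Sigma(r_0)) \le \lim_{t\to\infty}m_H(\Sigma_t)$ for the Anti--de Sitter--Schwarzschild background of mass $m/2$). In that situation the chain $m_H(\Sigma_0) \le \lim m_H(\Sigma_t) \le M$ is broken in the middle, and monotonicity gives no information about $M$. Your instinct that the subleading $O(e^{-t/2})$ term in the expansion must be controlled is correct, but the failure mode is overshooting, not undershooting, and it is tied to the non-roundness, not despite roundness.

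One smaller point: the parenthetical suggestion to ``instead try Bray's conformal flow'' should be treated as speculation rather than a step toward a proof; the paper does not take a position on whether the conjecture is true, only that the monotonicity-plus-roundness approach is insufficient, and the subsection on ``a nontrivial consequence of the Penrose inequality'' treats the conjecture as an open hypothesis.
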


The main purpose of this paper is to show that, contrarily to what was suggested in \cite{wang}, the inverse mean curvature flow does not have the necessary convergence properties in order to prove this conjecture. Before we explain the main theorem of this paper we need to introduce some notation first.

\subsection{Notation and Definitions}\label{notation}

Given a complete noncompact Riemannian $3$-manifold $(M,g)$, we denote its connection by $D$, the Ricci curvature by $Rc$, and the scalar curvature by $R$.  The induced connection on a surface $\Sigma\subset M$ is denoted by $\nabla$, the exterior unit normal by $\nu$ (whenever its defined), the mean curvature by $H$ and the surface area by $|\Sigma|.$  

A sphere $\Sigma\subset M$ with mean curvature $H(\Sigma)=2$ is said to be  {\em outermost} if it is the boundary of a compact set and its outside region contains no other spheres with $H=2$. We say that $\Sigma$ is {\em outer minimizing} if every compact perturbation lying outside of $\Sigma$ has bigger surface area.

In what follows $g_0$ denotes the standard metric on $S^2$.

\begin{defi}
A complete noncompact Riemannian $3$-manifold $(M,g)$ is said to be \ah if the following are true:
\begin{itemize}
\item[(i)] There is a compact set $K\subset\subset M$  such that $M\setminus K$ is diffeomorphic to $\R^3$ minus an open ball.

\item [(ii)] With respect to the spherical coordinates induced by the above diffeomorphism, the metric can be written as
$$g=dr^2+\sinh^2r \,g_0+h/(3\sinh r) + Q$$
where $h$ is a symmetric $2$-tensor on $S^2$ and 
$$\lvert Q\rvert+\lvert D Q\rvert +\lvert D^2 Q\rvert+\lvert D^3 Q\rvert\leq C\exp(-4r)$$
for some constant $C$.
\end{itemize}
\end{defi}
 For simplicity, the manifolds we consider have only one end. The above definition is stated differently from the one given in \cite{wang}  (see also \cite{herzlich}). Nonetheless, using a simple substitution of variable $$t=\ln\left(\frac{\sinh(r/2)}{\cosh(r/2)}\right),$$ they can be  seen to be equivalent.
 
 Note that a given coordinate system on $M\setminus K$ induces a radial function $r(x)$ on $M\setminus K$. With respect to this coordinate system, we define the {\em inner radius} and {\em outer radius} of a surface $\Sigma\subset M\setminus K$ to be
  $$\underline{r}=\sup\{r\,|\, B_r(0)\subset \Sigma\}\quad\mbox{and}\quad\overline{r}=\inf\{r \,|\, \Sigma\subset B_r(0)\}$$
respectively.
respectively. Furthermore, we denote the coordinate spheres induced by a coordinate system by
$$\{|x|=r\}:=\{x\in M\setminus K\,|\,r(x)=r\}$$
and the radial vector by $\partial_r$. We stress that the radial function $r(x)$ depends on the coordinate system chosen. If $\gamma$ is an isometry of $\H^3$, the radial function $s(x)$ induced by this new coordinate system is such that
$$|s(x)-r(x)|\leq C\quad\mbox{for all x }\in M\setminus K,$$
where $C$ depends only on the distance from $\gamma$ to the identity. We denote  by $\overline s$ and  $\underline s$ the correspondent quantities defined with respect to this new coordinate system. 


 The mass $M$ of an \ah  manifold $(M,g)$ with $R\geq -6$ is given by
 $$M=\frac{1}{16\pi}\left[\left(\int_{S^2}\tr_{g_0} hd\mu_0\right)^{2}-\sum_{i=1}^3\left(\int_{S^2}\tr_{g_0} h \,x_id\mu_0\right)^{2}\right]^{1/2},$$
 where $(x_1,x_2,x_3)$ are the standard coordinates on $S^2\subset \R^3$. This quantity is well defined (i.e. independent of the coordinate system chosen for $M\setminus K$) by \cite{wang} (see also \cite{herzlich}).

The Anti--de Sitter--Schwarzschild metric $(S^2\times [t_0, +\infty), g_m)$ is given by
$$g_m=\frac{dt^2}{1+t^2-m/t}+t^2g_0,$$
where we choose $t_0$  so that the mean curvature of the coordinate sphere $\Sigma_0=\{|x|=t_0\}$ is $2$. A change of variable (see \cite[page 294]{wang}) shows that the metric can be written as
$$g=dr^2+(\sinh^2 r+m/(3\sinh r)g_0+P,$$
where $P$ is  term with order $\exp(-5r)$.
An explicit computation reveals that the scalar curvature equals $-6$ and that 
$$M=\frac{m}{2}=\left(\frac{|\Sigma_0|}{16\pi}\right)^{1/2}.$$

\subsection{Statement of the main results}

We start by briefly describing how inverse mean curvature flow could prove the conjecture. Find a family of surfaces $(\Sigma_t)_{t\geq 0}$ with initial condition $\Sigma_0$  such that
$$\frac{dx}{dt}=\frac{\nu}{H(\Sigma_t)}.$$
Note that the existence theory  for a weak solution developed in \cite[Section 3]{Huisken2} can be used in the current setting. Moreover,  the same arguments in \cite[Section 5]{Huisken2} show that the quantity (called the {\em Hawking mass})
$$m_H(\Sigma_t):=\frac{ {|\Sigma_t|^{1/2}}}{(16\pi)^{3/2}}\left(16\pi-\int_{\Sigma_t}H^2-4\,d\mu_t\right)$$
is monotone nondecreasing along the flow. Therefore,
$$\left(\frac{|\Sigma_0|}{16\pi}\right)^{1/2}=m_H(\Sigma_0)\leq \lim_{t\to\infty}m_H(\Sigma_t).$$
The result would follow if one could show that the limit of the Hawking mass is not bigger than $M$.

 In the asymptotically flat case,  Huisken and Ilmanen \cite[Section 7]{Huisken2} showed this by proving that
$$\liminf_{t\to\infty}\frac{\mbox{area}(B_{\overline r_t}(0))}{\mbox{area}(B_{\underline r_t}(0))}=\liminf_{t\to\infty}\frac{\overline r_t}{{\underline r_t}}=1,$$
where $\overline r_t$ and $\underline r_t$ denote the outer radius and inner radius of $\Sigma_t$ respectively.  In our setting, it is not hard to see that in order for the  limit of the Hawking mass to be smaller than $M$ we  need to find an isometry $\gamma$ of $\H^3$ such that, with respect to the induced coordinate system, the following two properties hold:
\begin{itemize}
\item[1)]$$\liminf_{t\to\infty}\frac{|B_{\overline s_t}(0)|}{|B_{\underline s_t}(0)|}=\liminf_{t\to\infty}(\overline s_t-\underline s_t)=0,$$
where $\overline s_t$ and $\underline s_t$ denote, respectively, the outer radius and inner radius of $\Sigma_t$ with respect to the radial function $s(x)$ induced by $\gamma$;
\item[2)] If the metric with respect to the coordinates induced by $\gamma$ is written as
$$g=ds^2+\sinh^2s \,g_0+h^{\gamma}/(3\sinh s) + P,$$
then
\begin{equation*}
 \int_{S^2}x_i\tr_{g_0} h^{\gamma} d\mu_0=0\quad\mbox{for}\quad i=1,2,3,
\end{equation*}
where $x_i$ denotes the coordinate functions of the unit sphere in $\R^3$. 
\end{itemize}
If these properties do not hold,  it is impossible to compare the limit of the Hawking mass with the mass of the manifold. As a matter of fact, during the proof of the main theorem, we will construct   a solution to inverse mean curvature flow  for which the limit of the Hawking mass is bigger than the mass of the manifold. 

We can now state the main theorem.

\begin{thm}\label{main}There is an asymptotically hyperbolic $3$-manifold $(M,g)$ with scalar curvature $-6$ and for which its boundary $\Sigma_0$ is an outer-minimizing sphere with $H(\Sigma_0)=2$ satisfying  the following property. 

There is a smooth solution to inverse mean curvature flow $(\Sigma_t)_{t\geq 0}$ with initial condition $\Sigma_0$ such that {\em for every} coordinate system  we have
$$\liminf_{t\to\infty}(\overline s_t-\underline s_t)>0.$$
\end{thm}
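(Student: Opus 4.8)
The plan is to build the manifold $(M,g)$ by gluing inverse mean curvature flow in a fixed Anti--de Sitter--Schwarzschild background onto a region whose metric is prescribed via the Shi--Tam construction, so that the flow surfaces become round coordinate spheres in the glued-on piece but the Shi--Tam metric is asymptotically hyperbolic with a \emph{prescribed} (large and anisotropic) leading term $h$. Concretely, first I would run inverse mean curvature flow on a large Anti--de Sitter--Schwarzschild manifold $(S^2\times[t_0,T],g_m)$; the flow surfaces are exactly the coordinate spheres $\Sigma_\tau=\{|x|=\tau\}$, which are round, have positive mean curvature, and whose induced metrics $\tau^2g_0$ sweep out all sufficiently round large metrics on $S^2$. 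Stop the flow at some large sphere $\Sigma_T=\{|x|=T\}$, and record its induced metric and mean curvature.

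Next I would apply the Shi--Tam boundary-behavior result: given a compact $3$-manifold with a boundary $2$-sphere of positive Gauss and positive mean curvature, one can solve a parabolic equation for the lapse in spherical coordinates outside that boundary to produce a scalar-flat-or-$R\ge-6$ metric whose boundary geometry matches $\Sigma_T$ and which has well-controlled asymptotics. The key freedom I want to exploit is that the asymptotic expansion of the Shi--Tam metric — in particular the leading tensor $h$ and hence the integrals $\int_{S^2}x_i\tr_{g_0}h\,d\mu_0$ — is \emph{not} constrained to vanish: by choosing $\Sigma_T$ (equivalently the stopping time, or a small perturbation of the Anti--de Sitter--Schwarzschild parameters used to generate it) I can arrange that $h^\gamma$ fails to satisfy property 2) above for \emph{every} isometry $\gamma$ of $\H^3$. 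The point is that the vector $\left(\int_{S^2}x_i\tr_{g_0}h\,d\mu_0\right)_{i=1,2,3}$ transforms equivariantly under the (conformal/isometry) action, and one can make its "intrinsic" magnitude — the quantity appearing under the square root in the mass formula compared with $\int_{S^2}\tr_{g_0}h\,d\mu_0$ — strictly positive, which is an $O(3)$- and boost-invariant obstruction to ever making all three integrals zero.

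Then I would glue: the inner region is the Anti--de Sitter--Schwarzschild piece from $\Sigma_0=\{|x|=t_0\}$ out to $\Sigma_T$, the outer region is the Shi--Tam metric, matched to first order along $\Sigma_T$; a standard mollification gives a smooth metric with $R\ge-6$ everywhere (in fact $R=-6$ after an additional conformal or quasi-spherical adjustment, which Shi--Tam's construction accommodates), and $\Sigma_0$ remains an outer-minimizing sphere with $H=2$ because outward perturbations only increase area in this explicitly warped geometry. By construction the inverse mean curvature flow starting at $\Sigma_0$ is the family of round coordinate spheres throughout — first the Anti--de Sitter--Schwarzschild spheres, then the Shi--Tam quasi-spherical level sets — and in the Shi--Tam asymptotic regime, because the leading term $h$ violates property 2), the argument of the Huisken--Ilmanen-type comparison breaks: one shows directly that for every coordinate system the flow spheres $\Sigma_t$ are not asymptotically round in the hyperbolic sense, i.e. $\overline s_t-\underline s_t$ stays bounded away from $0$. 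This last step is really a computation: writing the flow spheres in a general coordinate system induced by an isometry $\gamma$, one expands $\overline s_t$ and $\underline s_t$ and sees that their difference is governed by the failure of $\int_{S^2}x_i\tr_{g_0}h^\gamma\,d\mu_0$ to vanish, together with the anisotropy of $h^\gamma$.

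The main obstacle I expect is making the Shi--Tam piece genuinely \emph{asymptotically hyperbolic} in the precise sense of the Definition — with the $\exp(-4r)$ decay of $Q$ and its derivatives up to third order — while retaining enough control to compute the leading tensor $h^\gamma$ in an arbitrary coordinate system and to verify the outer-minimizing property and the scalar curvature bound simultaneously. Shi--Tam's original estimates are tailored to the asymptotically flat setting, so porting them to the hyperbolic warped background (with the $\sinh^2 r\,g_0$ and $h/(3\sinh r)$ structure) and extracting the sharp decay requires redoing their parabolic lapse estimates in this geometry; this is the technical heart of the construction. The subsidiary difficulty is the equivariance/invariance argument showing \emph{no} isometry of $\H^3$ can repair property 2) — i.e. identifying the correct $O(3)\ltimes(\text{boosts})$-invariant of $h$ and showing it is nonzero for a suitable choice of stopping data.
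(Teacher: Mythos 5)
Your construction has a fatal flaw that reflects a misunderstanding of which obstruction the theorem is about. You propose to start the inverse mean curvature flow on the round coordinate spheres $\Sigma_\tau=\{|x|=\tau\}$ of Anti--de Sitter--Schwarzschild, extend by a Shi--Tam lapse flow, and then argue that because the leading mass tensor $h^\gamma$ can be arranged to have nonvanishing first moments for every $\gamma$, the flow spheres fail to become asymptotically round. But those are two independent conditions: in the discussion preceding the theorem, property~1) is the statement $\liminf_{t\to\infty}(\overline s_t-\underline s_t)=0$, while property~2) is the moment condition on $h^\gamma$. The theorem asserts that property~1) fails for every coordinate system, and (Proposition~\ref{nelsa}~f)) property~1) is governed \emph{entirely} by the intrinsic quantity $\widehat K$, the Gaussian curvature of the normalized induced metric $(4\pi)|\Sigma_t|^{-1}g_t$: some coordinate system realizes $\liminf(\overline s_t-\underline s_t)=0$ if and only if $\widehat K_t\to 1$. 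If your flow surfaces are the round coordinate spheres of the AdS--Schwarzschild inner region and the quasi-spherical level sets of the Shi--Tam region, their induced metrics are $\sinh^2\tau\,g_0$ (up to decaying errors), so $\widehat K_t\to 1$ automatically and the conclusion of the theorem is violated no matter what $h^\gamma$ looks like. In fact in the native coordinate system the level sets of the Shi--Tam time parameter become, after the change of variable $s=r-\tfrac{4}{3}h\exp(-3r)$ (cf.\ Theorem~\ref{shiflow}~(iii)), graphs over coordinate spheres with amplitude $O(\exp(-3s))\to 0$, so $\overline s_t-\underline s_t\to 0$ there.

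The idea you are missing is that the initial surface must be chosen \emph{non-round}, and the obstruction to roundness that survives the Shi--Tam metric deformation is intrinsic. The paper takes $\Sigma(r_0)=\{(r_0+f(\theta),\theta)\}$ for a fixed nonconstant $f$ (symmetric under coordinate-plane reflections), flows it by IMCF in AdS--Schwarzschild, and uses Hawking mass monotonicity to show the limiting normalized induced metric $\exp(2f_\infty)g_0$ cannot be round: if it were, reflection symmetry would force $f_\infty\equiv 0$, giving $\lim m_H(\Sigma_t)=m/2$, contradicting $m_H(\Sigma(r_0))>m/2$ from H\"older applied to $\fint\exp(-f)d\mu_0$. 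The Shi--Tam construction is then used \emph{not} to prescribe the asymptotics, but to rescale $H$ to $2$ on $\Sigma_0$ and force $R=-6$, all while \emph{preserving} the induced metrics $g_t$ on the flow surfaces. Because only the lapse changes, $\widehat K_t$ is unchanged, and Proposition~\ref{nelsa}~f) then yields $\liminf(\overline s_t-\underline s_t)>0$ for every coordinate system. The equivariance argument about the moments of $\tr_{g_0}h^\gamma$ that you concentrate on is relevant to a different point (whether the Hawking mass limit can be compared to the mass), not to the theorem's conclusion.

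Two secondary points. First, no gluing or mollification is used or needed: $(\Sigma_t)_{t\ge0}$ is a single smooth IMCF solution in one AdS--Schwarzschild background, and $\bar g=\tfrac{u^2}{H^2}dt^2+g_t$ is defined globally on $N=\bigcup_t\Sigma_t$; mollifying at an interface as you suggest would require extra work to retain $R\geq -6$ and the outer-minimizing property. Second, your outer-minimizing claim ("outward perturbations only increase area in this explicitly warped geometry") is not justified; the paper's argument uses the divergence-free vector field $\bar\nabla u/|\bar\nabla u|$ produced by the IMCF structure, which is the robust route.
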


In the next section we prove this theorem leaving all the technical aspects for the remaining sections. In that section, we also discuss whether the asymptotically hyperbolic manifold constructed in Theorem \ref{main} constitutes or not a counterexample to the Penrose inequality. In Section \ref{aha} some basic properties of spheres in asymptotically hyperbolic  manifolds are proven. In Section \ref{long} we prove a long time existence result for inverse mean curvature flow on Anti--de Sitter--Schwarzschild space. It is important that the estimates in  this section do not depend on the area of our initial condition and this requires a careful bookkeeping. Finally, in Section \ref{stf} we adapt the work of Shi-Tam \cite{shi-tam} and Mu-Tao-Yau \cite{mutao} to prove long time existence for a flow inspired in \cite{shi-tam}.

{\bf Acknowledgments } The author would like to express his thanks to Gang Tian for many useful discussions and also for his interest in this work.

\section{Proof of the main theorem}

We now prove the main theorem.

\begin{proof}[Proof of Theorem \ref{main}]
	Consider the ambient manifold to be Anti--de Sitter--Schwarzschild  $(S^2\times [t_0, +\infty), g_m)$  with positive mass.
	Set $f$ to be a  smooth function on $S^2$ with
	$$\fint_{S^2}\exp(2f)d\mu_0=1$$
	that is invariant under reflection on all coordinate planes and consider
	$$\Sigma(r_0)=\{( r_0+f(\theta),\theta)\,|\,\theta \in S^2\}\subset S^2\times [t_0, +\infty).$$
	According to Proposition \ref{nelsa} e) we know that
	$$\lim_{r_0\to\infty}m_{H}(\Sigma(r_0))=\frac{m}{2}\fint_{S^2}\exp(-f)d\mu_0>\frac{m}{2},$$
	where the last inequality is a consequence of H\"older's inequality. 
	
	Choose $r_0$ sufficiently large such that $\Sigma(r_0)$ satisfies hypothesis (H) of Section \ref{long} and
	$$m_{H}(\Sigma(r_0))>\frac{m}{2}.$$
	This is possible  because, due to 
 Proposition \ref{nelsa}, we  know that
	$$\lim_{r_0\to\infty} H=2\quad\mbox{and}\quad\lim_{r_0\to\infty}\n^2=0.$$ 
	Therefore, we can apply Theorem \ref{imcf} and conclude the existence of a smooth solution $(\Sigma_t)_{t\geq 0}$ to inverse mean curvature flow where, by monotonicity of Hawking mass,
	$$\frac{m}{2}<m_{H}(\Sigma(r_0))\leq\lim_{t\to\infty}m_{H}(\Sigma_t).$$
Denote the induced metric  on $\Sigma_t$ by $g_t$. The above inequality  implies
\begin{lemm}\label{mao} The Gaussian curvature $\widehat K_t$ of $\Sigma_t$ with respect to the normalized metric
$$\hat g_t:=(4\pi)|\Sigma_t|^{-1}g_{t}$$
does not converge to one when $t$ goes to infinity.
\end{lemm}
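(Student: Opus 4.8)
The plan is to argue by contradiction using the Gauss--Bonnet theorem together with the fact that $\lim_{t\to\infty} m_H(\Sigma_t) > m/2$. Suppose the Gaussian curvature $\widehat K_t$ of $(\Sigma_t,\hat g_t)$ did converge to $1$ uniformly as $t\to\infty$. Since $\hat g_t$ has total area $4\pi$ by construction, the rescaled metrics $\hat g_t$ would then be converging (in a suitable weak sense, say after passing to a subsequence and using standard compactness for surfaces of bounded area with curvature close to a constant) to the round metric $g_0$ on $S^2$; in particular one would have uniform control on $\hat g_t$ and the estimate $\widehat K_t \to 1$ would force the traceless part of the second fundamental form, or rather the deviation of the induced metric from round, to be small in an integral sense.

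The key computational step is to rewrite the Hawking mass in terms of the normalized metric. Writing $|\Sigma_t| = 4\pi \rho_t^2$ so that $\hat g_t = \rho_t^{-2} g_t$, one has
$$
m_H(\Sigma_t) = \frac{\rho_t}{2}\left(1 - \frac{1}{16\pi}\int_{\Sigma_t} H^2\, d\mu_t\right) + \frac{\rho_t}{2}\cdot\frac{1}{4\pi}\int_{\Sigma_t} 1\, d\mu_t,
$$
and using Gauss--Bonnet $\int_{\Sigma_t} \widehat K_t\, d\hat\mu_t = 4\pi$ together with the Gauss equation relating $\widehat K_t$ (or $K_t$) to the ambient sectional curvature (which is $-1 + O(\exp(-3r))$ deep in the asymptotic region since $R = -6$ and the metric is asymptotically hyperbolic) and to $H^2$ and $|\mathring A|^2$. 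After this substitution the Hawking mass takes the schematic form
$$
m_H(\Sigma_t) = \frac{\rho_t}{16\pi}\int_{\Sigma_t}\left(|\mathring A|^2 + (\text{curvature error})\right) d\mu_t + (\text{lower order}),
$$
so that if $\widehat K_t \to 1$ — equivalently, if $(\Sigma_t,\hat g_t)$ becomes round — then the relevant integrals degenerate in precisely the way that forces $\lim_{t\to\infty} m_H(\Sigma_t) \leq m/2$, using the asymptotic form $g_m = dr^2 + (\sinh^2 r + m/(3\sinh r))g_0 + P$ to evaluate the surviving $m$-term. This is exactly the computation that shows, in the asymptotically flat case, that roundness of the rescaled surfaces pins the limiting Hawking mass to the ADM mass; here it pins it to $m/2 = M$.

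I expect the main obstacle to be making the convergence argument rigorous: from $\widehat K_t\to 1$ one must extract enough geometric control (a uniform bound on the diameter and on $\|\hat g_t - g_0\|$ in a good norm, via e.g. a compactness/bootstrap argument for the uniformization of $(\Sigma_t,\hat g_t)$) to justify passing to the limit in the integral identity for $m_H$, and simultaneously control the position of $\Sigma_t$ in the asymptotic region (its inner and outer radii $\underline r_t, \overline r_t$) so that the ambient curvature error terms genuinely vanish and the $m$-term is correctly captured. Once that control is in place, the contradiction with $\lim_{t\to\infty} m_H(\Sigma_t) > m/2$ is immediate, and the lemma follows. Note that this is really the contrapositive of the statement that roundness would make inverse mean curvature flow prove the conjecture; the lemma records precisely the failure mode, and Theorem \ref{main} will upgrade the failure of $\widehat K_t \to 1$ to the quantitative statement $\liminf(\overline s_t - \underline s_t) > 0$ in every coordinate system.
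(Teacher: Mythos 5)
There is a genuine gap, and it is precisely where you flag the ``main obstacle.'' The implication you are trying to establish — that $\widehat K_t\to 1$ forces $\lim_{t\to\infty} m_H(\Sigma_t)\leq m/2$ — is \emph{false} as stated, because of the M\"obius ambiguity on $S^2$. By Theorem \ref{imcf} (iii) the surfaces are graphs $r=\hat r_t+f_t(\theta)$ with $f_t\to f_\infty$, and by Proposition \ref{nelsa} a) the normalized metrics $\hat g_t$ converge smoothly to $\hat g=\exp(2f_\infty)g_0$ (so the compactness you worry about is already done; you do not need to redo it). If $\widehat K_t\to 1$ then $\hat g$ is round of area $4\pi$, but this only says $\exp(2f_\infty)g_0$ is isometric to $g_0$ via some conformal map; it does \emph{not} force $f_\infty\equiv 0$. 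For a non-identity M\"obius conformal factor $f_\infty$ with $\fint\exp(2f_\infty)\,d\mu_0=1$, Proposition \ref{nelsa} e) gives $\lim m_H(\Sigma_t)=\frac{m}{2}\fint\exp(-f_\infty)\,d\mu_0>m/2$ by H\"older — i.e., round $\hat g$ is perfectly compatible with a limiting Hawking mass strictly greater than $m/2$. Your Gauss--Bonnet/Gauss-equation rewriting of $m_H$ cannot detect this, because the Hawking mass is insensitive to which coordinate system (which isometry of $\H^3$) you use, while the ``$m$-term'' you want to isolate is only captured correctly in a distinguished center-of-mass frame.

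The paper closes this gap by a symmetry argument your proposal never invokes. The initial graph function $f$ was chosen invariant under reflection in all three coordinate planes, the ambient Anti--de Sitter--Schwarzschild metric shares these symmetries, and hence so does the flow and the limit $\hat g=\exp(2f_\infty)g_0$. A constant-curvature metric conformal to $g_0$ that is symmetric under all coordinate-plane reflections must have $f_\infty\equiv 0$ (the only M\"obius map commuting with all three reflections is the identity). Only then does Proposition \ref{nelsa} e) give $\lim m_H(\Sigma_t)=\frac{m}{2}\fint\exp(0)\,d\mu_0=\frac{m}{2}$, contradicting the monotonicity bound $\lim m_H(\Sigma_t)\geq m_H(\Sigma(r_0))>m/2$. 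Without the reflection symmetry step, the contradiction simply does not materialize, and no amount of integral identity manipulation will substitute for it. To repair your proof you would need to add the observation that the construction is reflection-symmetric, deduce $f_\infty=0$, and then invoke Proposition \ref{nelsa} e) — at which point you have reproduced the paper's argument and the Gauss--Bonnet computation becomes unnecessary.
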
	
\begin{proof}
From Theorem \ref{imcf} (iii) we know that	
$$\Sigma_t=\{(\hat r_t+f_t(\theta),\theta)\,|\,\theta \in S^2\},$$
where $\hat r_t$ is such that 
$$|\Sigma_t|=4\pi\sinh^2 \hat r_t$$
and the functions $f_t$ converge to a smooth function $f_{\infty}$ defined on $S^2$. Moreover, Proposition \ref{nelsa} (more precisely, identity \eqref{gibson}) implies that the metric $\hat g_t$ converges to $\hat g=\exp(2f_{\infty})g_0.$ Note that the ambient metric is preserved by reflections with respect to the coordinate planes and thus the metric $\hat g$ also shares these symmetries.

Suppose that $\widehat K_t$ converges to one. Then $\hat g$ is a constant scalar curvature metric which is symmetric  under reflection on the coordinate planes and so $f_{\infty}$ must be identically zero. If this were true, it would follow from Proposition \ref{nelsa} e) that
$$\lim_{t\to\infty}m_{H}(\Sigma_t)=\frac{m}{2}$$
and this is impossible.
\end{proof}

Outside $\Sigma(r_0)$, i.e., on the region $$N:=\bigcup_{t\geq 0}\Sigma_t,$$ the metric $g_m$ can be written as
$$g_m=\frac{dt^2}{H^2}+g_t.$$
We want to find a
new asymptotically hyperbolic metric $\bar g$ with $R(\bar g)=-6$ such that, with respect to this new metric, the mean curvature of
$\Sigma_0$ is $2$, $(\Sigma_t)_{t\geq 0}$ is a solution to \imc flow, and the induced metric on $\Sigma_t$ by $\bar g$ coincides with $g_t$. This would finish the proof for the following two reasons. 

First, because $(\Sigma_t)_{t\geq 0}$ is a smooth solution to inverse mean curvature flow for $\bar g$, there is a smooth function $u$ on $N$ such that
$$\mbox{div}_N\left(\frac{\bar \nabla u}{|\bar \nabla u|}\right)=0\quad\mbox{and}\quad u^{-1}(t)=\Sigma_t.$$ Therefore, if $\Sigma'$ is a surface in $N$ containing $\Sigma(r_0)$ in its interior,  the divergence theorem implies that
$$|\Sigma(r_0)|=\int_{\Sigma(r_0)}|\bar \nabla u|^{-1}\langle \nu,\bar \nabla u\rangle d\bar \mu=\int_{\Sigma'}|\bar \nabla u|^{-1}\langle \nu,\bar \nabla u\rangle d\bar \mu\leq |\Sigma'|$$
and thus $\Sigma(r_0)$ is outer-minimizing.

Second, the intrinsic geometry of $\Sigma_t$ is maintained and so we know from Lemma \ref{mao} that the Gaussian curvature of $\Sigma_t$ with respect to the normalized metric does not converge to one. Proposition \ref{nelsa} f) implies that no matter the coordinate system we choose we will always have
$$\liminf_{t\to\infty}(\overline s_t-\underline s_t)>0.$$

The construction of the metric $\bar g$ is inspired by the work of Shi and Tam \cite{shi-tam}. Consider smooth positive functions $u$ defined on $N$
such that $$u_{\Sigma_0}:=H(\Sigma_0)/2$$ and
    \begin{equation}\label{equacao}
    2H^2\frac{\partial u}{\partial t}=2u^2\Delta_t u+4u^2H\langle\nabla u,\nabla H^{-1}\rangle+
    (u-u^3)(R_t+6-2H\Delta_t H^{-1}),
    \end{equation}
where the Laplacian and gradient term are computed with respect to the metric $g_t$ and $R_t$ is the scalar
curvature of $\Sigma_t$. Having such a function $u$,  the new metric is defined to be
$$\bar g:=\frac{u^2}{H^2}dt^2+g_t$$
and it has scalar curvature $-6$ by Lemma \ref{yo}.
Note that the intrinsic geometry of $\Sigma_t$ is preserved and  the mean curvature and the exterior normal vector of $\Sigma_t$ computed with respect to $\bar g$ equal
    $$\bar H(\Sigma_t)=H(\Sigma_t)/u\quad\mbox{and}\quad\bar\nu=\nu/u$$
    respectively. Thus $$\frac{\bar \nu}{\bar H}=\frac{\nu}{H}$$
    and this implies that $(\Sigma_t)_{t\geq 0}$ is indeed a solution to \imc flow for the new metric with
    $\bar H(\Sigma_0)=2.$
    
 We are only left to check that equation \eqref{equacao} has a solution. Note that, provided we choose $r_0$ sufficiently large, Proposition \ref{nelsa}  and Theorem \ref{imcf} imply that
    $$R_t+6-2H\Delta_t H^{-1}>0$$
for all $t$. It is  important to remark that this estimate holds because the constants on Theorem \ref{imcf} do not depend on $\underline r_0$ but only on $\overline r_0-\underline r_0$. Therefore, Theorem \ref{shiflow} implies that equation \eqref{equacao} admits a solution and that the metric $\bar g$ is asymptotically hyperbolic.
	
\end{proof}

\subsection{A nontrivial consequence of the Penrose inequality} 

Assuming that the Penrose inequality holds as conjectured by Xiadong Wang, we will argue that for every smooth function $f$ defined on $S^2$ with
$$\fint_{S^2}\exp(2f)d\mu_0=1$$
we have
$$\left(\fint_{S^2} K_f\exp(3f)d\mu_0\right)^{2}-\sum_{i=1}^3\left(\fint_{S^2} K_f\exp(3f) \,x_id\mu_0\right)^{2}\geq 1,$$
where $K_f$ denotes the Gaussian curvature of $\exp(2f)g_0.$ A simple computation shows that an equality is attained if $\exp(2f)g_0$ has constant scalar curvature.

In what follows we use the same notation as in the proof of Theorem \ref{main}.  Set $f$ to be a  smooth function on $S^2$ with
	$$\fint_{S^2}\exp(2f)d\mu_0=1$$
	 and consider
	$$\Sigma(r_0)=\{( r_0+f(\theta),\theta)\,|\,\theta \in S^2\}\subset S^2\times [t_0, +\infty).$$
	Given a geometric quantity $T$ defined on $\Sigma(r_0)$, we use the notation $T=O(\exp(-\underline r_0)$ whenever we can find a constant $C$ for which
	$$|T|\leq C\exp(-\underline r_0.)$$
	
	Denote by $M(r_0)$ the mass of the metric $\bar g$ constructed in the proof of Theorem \ref{main}. It is not hard to see that, by choosing $r_0$ sufficiently large, we can have $f_{\infty}$ (defined in Theorem \ref{imcf} (iii)) and $w_{\infty}$ (defined in Theorem \ref{shiflow} (i)) respectively, as close to $f$ and $w_0$ as we want. 
	Moreover, from Proposition \ref{nelsa} d), we have that
	$$2w_0=|\Sigma_0|(H-2)/(4\pi)=\widehat K(\Sigma(r_0))+O(\exp(-\underline r_0)),$$
	where the Gaussian curvature is computed with respect to the normalized metric
	$\hat g(r_0):=4\pi|\Sigma(r_0)|^{-1}g_{\Sigma(r_0)}.$
	Therefore, denoting the mass two tensor of $\bar g$  by $\bar h$, we have that
	$$(16\pi)^{1/2}\bar h|\Sigma(r_0)|^{-1/2}$$
is well approximated by
	\begin{multline*}
	\left(m{16\pi}^{1/2} {|\Sigma(r_0)|}^{-1/2}+2\exp(3f)w_{0}\right)g_0\\
	=\left(\widehat K(\Sigma(r_0))\exp(3f)+O(\exp(-\underline r_0))\right)g_0.
	\end{multline*}
	Because the metric $\hat g(r_0)$ converges to $\exp(2f)g_0$ (Proposition \ref{nelsa} a)), we obtain that
	\begin{multline*}
	\lim_{r_0\to\infty} M(r_0)^2\frac{16\pi} {|\Sigma(r_0)|}\\
	=\left(\fint_{S^2} K_f\exp(3f)d\mu_0\right)^{2}-\sum_{i=1}^3\left(\fint_{S^2} K_f\exp(3f) \,x_id\mu_0\right)^{2}.
	\end{multline*}
	If we assume the Penrose inequality, we know that
	$$M(r_0)\left(\frac{16\pi} {|\Sigma(r_0)|}\right)^{1/2}\geq 1$$
	and so the desired inequality follows.

\section{Basic properties of graphical surfaces on asymptotically hyperbolic  $3$-manifolds}\label{aha}

In this section $(M,g)$ denotes an asymptotically hyperbolic manifold with some given coordinate system on $M-\setminus K$. Given a function $f$ on $S^2$ we consider the surfaces
$$\Sigma(q_0)=\{( q_0+f(\theta),\theta)\,|\,\theta \in S^2\}\subset M-\setminus K.$$
The function $f$ satisfies hypothesis $(I)$ if there are constants $V, V_0$ such that
\begin{equation*}
(I)\qquad\left \{ \begin{aligned}
				 &|f|\leq V,\\
				 &|\nabla_0 f|\leq V_0,				
				\end{aligned}
\right.
\end{equation*} where $\nabla_0$ denotes the connection with respect to the round metric on $S^2$.  We denote by $\overline s(q_0)$ and $\underline s (q_0)$, respectively, the outer radius and inner radius of $\Sigma(q_0)$, where $s(x)$ is the radial function induced by some coordinate system $\gamma$.

Given any geometric quantity $T$ defined on $\Sigma(q_0)$, we use the notation
$$T=O(\exp(-kr))$$
when we can find a constant $C=C(g,V,V_0)$ for which
$$|T|\leq C\exp(-kr).$$

The next proposition collects some properties for the surfaces $\Sigma(q_0)$ when $q_0$ is very large.

\begin{prop} \label{nelsa}Assume that $f$ satisfies hypothesis $(I)$.  The following properties hold:
\begin{enumerate}

	\item[a)]
	When $q_0$ goes to infinity, the normalized metrics 
	$$\hat g(q_0):=4\pi|\Sigma(q_0)|^{-1}g_{\Sigma(q_0)}$$
	converge to
	$$\hat g:=\left(\fint_{S^2}\exp(2f)d\mu_0\right)^{-1}\exp(2f)g_0.$$
	\item[b)]There is a constant $C=C(g,V,V_0)$ such that, for all $q_0\geq 1$,
$$ |\Sigma(q_0)||H-2|+|\Sigma(q_0)||\tf|\leq C+C\sup_{S^2}|\nabla^2_0 f|$$
and
$$\sup_{S^2}|\nabla^2_0 f| \leq C(|\Sigma(q_0)||H-2|+|\Sigma(q_0)||\tf|)+C;$$
\item[c)] Assume that $$\sup_{S^2}|\nabla_0^k f|\leq E\quad\mbox{for all }k=2,\cdots,n-1.$$ There is a constant $C=C(g,E,V,V_0)$ such that for all $q_0\geq 1$
$$|\Sigma(q_0)|^{n+2}|\nabla^n A|^2\leq C+C\sup_{S^2}|\nabla^n_0 f|^2$$
and
$$\sup_{S^2}|\nabla^n_0 f|^2\leq C+C|\Sigma(q_0)|^{n+2}|\nabla^n A|^2;$$
\item [d)] The mean curvature of $\Sigma(q_0)$ satisfies
$$H^2-4=4K(\Sigma)+2\n^2-\frac{2\tr_{g_0}h}{\sinh^3 r}+O(\exp(-4r));$$
\item [e)]
$$
\lim_{q_0\to\infty}m_{H}(\Sigma(q_0))=\frac{1}{4}\left(\fint_{S^2}\exp(2f)d\mu_0\right)^{1/2}\fint_{S^2}\tr_{g_0}h\exp(-f) d\mu_0.$$
\item [f)] There is a coordinate system $\gamma$ for which
$$\lim_{q_0\to\infty}(\overline s(q_0)-\underline s (q_0))=0$$
if and only if
$$\widehat K=\lim_{q_0\to\infty} \widehat K(q_0)=1,$$
where $\widehat K(q_0)$ is the Gaussian curvature of $\Sigma(q_0)$ with respect to $\hat g(q_0)$.
\end{enumerate}

\end{prop}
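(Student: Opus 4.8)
The plan is to compute $\lim_{q_0\to\infty}(\overline s(q_0)-\underline s(q_0))$ for an arbitrary coordinate system and to recognise the condition for this limit to vanish as an intrinsic property of the limit metric $\hat g$ from a). Since the abstract induced metric on $\Sigma(q_0)$ is independent of the coordinate system, the coordinate system only enters through the way $\Sigma(q_0)$ is written as a radial graph. Every admissible coordinate system on $M\setminus K$ coincides near infinity, up to a diffeomorphism that is lower order in a sense made precise below, with one obtained from the given one by an isometry $\gamma$ of $\H^3$ (this is the rigidity underlying the well-definedness of the mass, see \cite{wang, herzlich}); as will be seen, such a lower order correction does not change the limit of $\overline s(q_0)-\underline s(q_0)$, so it is enough to let $\gamma$ run over $\mathrm{Isom}(\H^3)$. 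Using the ball model with the original centre at $0$, an isometry $\gamma$ is determined up to a rotation by the new centre $p=\gamma(0)$, and the first step is the computation (using the distance formula $\cosh d(a,b)=1+2|a-b|^2/((1-|a|^2)(1-|b|^2))$) that the associated radial function $s$ satisfies
$$s(x)-r(x)\longrightarrow\Psi_\gamma(\theta):=\ln\frac{|\theta-p|^2}{1-|p|^2}\qquad\text{as }x\to\theta\in S^2,$$
uniformly in $\theta$ and with error decaying exponentially in $r$; here $\Psi_\gamma$ is the Busemann function of $\H^3$ at the ideal point $\theta$, normalised at $0$, evaluated at $p$.

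Granting this, the computation of the limit is routine bookkeeping. For $q_0$ large the surface $\Sigma(q_0)$ is again a radial graph in the $\gamma$-coordinates, and by hypothesis $(I)$ and the above asymptotics its $s$-value over the point of original direction $\theta$ is $q_0+f(\theta)+\Psi_\gamma(\theta)+o(1)$, with $o(1)$ uniform in $\theta$ as $q_0\to\infty$; the lower order correction coming from replacing a general coordinate system by a genuine isometry is absorbed here. Since $\Sigma(q_0)$ bounds a region containing $p$ once $q_0$ is large, taking the supremum and infimum over $\theta$ gives $\overline s(q_0)=q_0+\max_{S^2}(f+\Psi_\gamma)+o(1)$ and $\underline s(q_0)=q_0+\min_{S^2}(f+\Psi_\gamma)+o(1)$, whence
$$\lim_{q_0\to\infty}\bigl(\overline s(q_0)-\underline s(q_0)\bigr)=\max_{S^2}(f+\Psi_\gamma)-\min_{S^2}(f+\Psi_\gamma).$$

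The final step identifies which functions arise as $\Psi_\gamma$. For each $p$ the metric $e^{-2\Psi_\gamma}g_0=\bigl((1-|p|^2)/|\theta-p|^2\bigr)^2g_0$ is the round metric on $S^2$ representing the conformal infinity of $\H^3$ relative to the geodesic spheres centred at $p$, hence has Gaussian curvature one, and as $p$ ranges over $\H^3$ these range over exactly the round metrics of curvature one conformal to $g_0$ — equivalently, over all conformal images of $g_0$, by Liouville's theorem, the rotational ambiguity in $\gamma$ being the isometry group of $g_0$. Therefore some coordinate system gives limit $0$ if and only if $f+\Psi_\gamma$ is constant for some $\gamma$, i.e. if and only if $e^{2f}g_0$ equals, up to a positive constant factor, a round metric of curvature one, i.e. if and only if $e^{2f}g_0$ has constant Gaussian curvature. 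Because $\hat g=(\fint_{S^2}\exp(2f)\,d\mu_0)^{-1}\exp(2f)g_0$ differs from $e^{2f}g_0$ by a constant factor, this is the same as $\hat g$ having constant Gaussian curvature; and then, since $\mathrm{area}(\hat g)=4\pi$ by construction while Gauss--Bonnet gives $\int_{S^2}\widehat K\,d\mu_{\hat g}=2\pi\chi(S^2)=4\pi$, that constant value is forced to be $1$. As a) gives $\hat g(q_0)\to\hat g$ in a topology strong enough to pass to Gaussian curvatures, $\widehat K=\lim_{q_0\to\infty}\widehat K(q_0)$ is the Gaussian curvature of $\hat g$, so indeed some coordinate system gives limit $0$ if and only if $\widehat K=1$.

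The main obstacle is the first step and its interpretation: that a change of coordinate system shifts the radial function, to leading order, by exactly the Busemann term $\Psi_\gamma$, and that $e^{-2\Psi_\gamma}g_0$ is precisely the round representative of the conformal boundary attached to the new centre. This is what turns "no admissible coordinate system makes $\Sigma(q_0)$ asymptotically round" into the intrinsic statement "$\hat g$ is not round". One should also check that reducing a general admissible coordinate system to an honest isometry of $\H^3$ perturbs $s-r$ only by a term that is $o(1)$ along $\Sigma(q_0)$ as $q_0\to\infty$, so that the limit — equivalently the $\liminf$ — of $\overline s(q_0)-\underline s(q_0)$ is genuinely unaffected by this reduction.
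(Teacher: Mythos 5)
You prove only part f), taking a)--e) as given, whereas the statement in question is the full Proposition; that said, f) is the only part whose proof is genuinely interesting, so I will focus on comparing approaches there.

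Your argument for f) is correct but takes a genuinely different route from the paper's. The paper studies the auxiliary function $w(x)=s(x)-\hat q_0$ and shows, via its Proposition 3.3 in \cite{neves2} and part d) of the present proposition, that $w$ satisfies the Liouville-type equation $\hat\Delta w = e^{-2w}-\widehat K(q_0)+P(q_0)$ with $\fint|P(q_0)|\,d\hat\mu\to 0$; the forward implication comes from letting $w\to 0$ in this equation, and the converse is obtained by producing a conformal map $\gamma$ of $S^2$ with $\gamma^*\hat g=g_0$, promoting it to an isometry of $\H^3$, and verifying directly that $w\to 0$ for that coordinate system. You instead compute, in the ball model, that changing the centre from $0$ to $p=\gamma(0)$ shifts the radial function by the Busemann function $\Psi_\gamma(\theta)=\ln\bigl(|\theta-p|^2/(1-|p|^2)\bigr)$ to leading order, whence $\overline s(q_0)-\underline s(q_0)\to\mathrm{osc}(f+\Psi_\gamma)$; you then invoke the fact that $e^{-2\Psi_\gamma}g_0$ runs exactly over the round representatives of the conformal class of $g_0$ as $p$ runs over $\H^3$, and close the loop with Gauss--Bonnet and the $4\pi$ area normalisation. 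Your computation of $s-r\to\Psi_\gamma$ from the ball-model distance formula is accurate, and the identification of $e^{-2\Psi_\gamma}g_0$ with the renormalised round metric at centre $p$ is the standard fact you say it is. The advantage of your route is that it makes the quantity $\lim(\overline s-\underline s)$ explicitly computable as an oscillation, rather than merely characterising when it vanishes; the paper's route has the advantage of reusing machinery (the equation for $\Delta s$) already needed elsewhere. One caveat: you flag but do not settle the reduction from general admissible coordinate systems to honest isometries of $\H^3$; the paper's definition in Section 1.1 only considers coordinate systems obtained from the given one by isometries, so this is a non-issue relative to the paper's actual scope, but you should not leave it as a loose end without noting that.
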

Note that this proposition holds, with obvious modifications, if 
$$\Sigma(q_0)=\{( q_0+f_{q_0}(\theta),\theta)\,|\,\theta \in S^2\},$$
where the functions $f_{q_0}$ converge to  a function $f$ on $S^2$.
\begin{proof}
Consider tangent vectors to $\Sigma(q_0)$ $$\partial_i:=\frac{\partial f}{
\partial {\theta_i}}\partial_r+{\partial_{\theta_i}},\quad i=1,2,$$
where $(\theta_1,\theta_2)$ represent coordinates on $S^2$ which are orthonormal (with respect to $g_0$) at a given point $p$. 

The induced metric on $\Sigma(q_0)$ is given by
$$g_{ij}=\frac{\partial f}{
\partial {\theta_i}}\frac{\partial f}{
\partial {\theta_j}}+\sinh^2 (q_0+f) g_0(\partial_{\theta_i},\partial_{\theta_j})+O(\exp(-r))
$$
and so
$$\sqrt {\mbox{det} g_{ij}}=\sinh^2(q_0+f)\sqrt{\mbox{det} g_0}+O(1)=\sinh^2(q_0)\exp(2f)\sqrt{\mbox{det} g_0}+O(1).$$
This implies that
\begin{equation}\label{gibson}
\lim_{q_0\to\infty}\frac{|\Sigma(q_0)|}{4\pi \sinh^2 q_0}=\fint_{S^2}\exp(2f)\,d\mu_0
\end{equation}
and the first property follows from the fact that
$$\lim_{q_0\to\infty} (\sinh q_0)^{-2} g_{ij}=\exp(2f)g_0(\partial_{\theta_i},\partial_{\theta_j}).$$

Denoting the connection with respect to the standard hyperbolic metric by  $\bar D$,
 we have
$$\bar D_{\partial_r} \partial_r=0,\quad \bar D_{{\partial_{\theta_j}}}\partial_r
=\frac{\cosh r}{\sinh r}{\partial_{\theta_j}},\quad \bar D_{\partial_{\theta_i}}{\partial_{\theta_j}}=-\sinh r {\cosh r}\delta_{ij}\partial_r,$$
and
$$|D-\bar D|\leq C\exp(-3r)$$
for some $C=C(g)$.

Thus,
\begin{multline*}
D_{\partial_i}\partial_j =\frac{\partial^2 f}{\partial {\theta_j}\partial \theta_i}\partial_r+\frac{\partial
f}{\partial {\theta_j}}\frac{\partial
f}{\partial {\theta_i}}D_{\partial_r}\partial_r+\frac{\partial
f}{\partial {\theta_j}}D_{\partial_{\theta_i}}\partial_r+\frac{\partial
f}{\partial {\theta_i}}D_{\partial_{r}}\partial_{\theta_j}+ D_{\partial_{\theta_i}}\partial_{\theta_j}\\
=-\cosh r \sinh r \delta_{ij}\partial_r+ \frac{\partial^2 f}{\partial {\theta_j}\partial \theta_i}\partial_r+O(1)\partial_{\theta_j}+O(\exp(-r)).
\end{multline*}
An easy computation shows that the exterior unit normal is given by
\begin{equation}\label{spring}
\nu=(1+O(\exp(-2r))\partial_r+O(\exp(-2r))\partial_{\theta_1}+O(\exp(-2r))\partial_{\theta_2}
\end{equation}
and thus
$$A_{ij}=2\frac{\cosh r}{\sinh r}g_{ij}-\frac{\partial^2 f}{\partial {\theta_j}\partial \theta_i}+O(1).$$
This implies Property b).

Property c) follows from what was done above plus some tedious computations. We now prove Property d).

It was shown in \cite[Lemma 3.1.]{neves2} that
$$Rc(\nu,\nu)+2=-\frac{\tr_{g_0}h}{2\sinh^3 r}+O(\exp(-4r))\quad\mbox{and}\quad R=-6+O(\exp(-4r)).$$

Combining this with Gauss equations we obtain that
\begin{align*}
	H^2-4&=4K(\Sigma(q_0))+2\n^2+4(R(\nu,\nu)-R/2-1)\\	
	&=4K(\Sigma(q_0))+2\n^2-\frac{2\tr_{g_0}h}{\sinh^3 r}+O(\exp(-4r)).
\end{align*}
 
Combining Property a) with Property d), it follows from the definition of Hawking mass that
\begin{align*}
\lim_{q_0\to\infty}m_{H}(\Sigma(q_0))&=\lim_{q_0\to\infty}\frac{|\Sigma(q_0)|^{1/2}}{(16\pi)^{3/2}}
\int_{\Sigma(q_0)}\frac{2\tr_{g_0}h}{\sinh^3
r}d\mu\\
&=\lim_{q_0\to\infty}\frac{1}{(16\pi)^{3/2}}\fint_{\Sigma(q_0)}\frac{2|\Sigma(q_0)|^{3/2}}{\sinh^3
r}\tr_{g_0}h d\hat\mu\\
 &=\frac{1}{4^{3/2}}\left(\fint_{S^2}\exp(2f)d\mu_0\right)^{3/2}\fint_{S^2}2\tr_{g_0}h\exp(-3f)
d\hat\mu\\
&=\frac{1}{4}\left(\fint_{S^2}\exp(2f)d\mu_0\right)^{1/2}\fint_{S^2}\tr_{g_0}h\exp(-f) d\mu_0.
\end{align*}

Finally, we prove Property e).  Given a coordinate system induced by an isometry $\gamma$ of $\H^3$, we consider the 
function on $\Sigma(q_0)$ given by
$$w(x)=s(x)-\hat q_0\quad\mbox{where}\quad |\Sigma(q_0)|=4\pi\sinh^2 q_0,$$
where $s(x)$ is the radial function for this coordinate system.
For all $q_0$ sufficiently large, $\Sigma(q_0)$ is graphical over the coordinate spheres for this new coordinate system and so
$$\limsup_{q_0\to\infty} \left(|\partial_s^{\top}|^2+(1-\langle\nu,\partial_s\rangle)\right)\exp(2q_0)<\infty.$$
Due to \cite[Propostion 3.3]{neves2}, we know that
\begin{multline*}
    \Delta s = (4-2\lvert\partial _s^{\top} \rvert^2 )\exp(-2s)+2-H\\ +(H-2)(1-\langle \partial _s, \nu \rangle)+(1-\langle \partial _s, \nu \rangle)^2
        + O(\exp(-3s)).
\end{multline*}
Therefore, Property d) implies that  $w$ satisfies the following equation with respect to $\hat g(q_0)$

\begin{equation}\label{coroa}
\hat \Delta w = \exp(-2w)-\widehat K(q_0)+P(q_0),
\end{equation}
where $$\lim_{q_0\to\infty}\fint_{\Sigma(q_0)}|P(q_0)|d\hat \mu=0.$$

Suppose the  coordinate system induced by $\gamma$ is such that
$$\lim_{q_0\to\infty}\overline s(q_0)-\underline s (q_0)=0.$$
Then
$$\lim_{q_0\to\infty} w=0$$
and so equation \eqref{coroa} implies that
$$\lim_{q_0\to\infty}\widehat K(q_0)=1.$$

Assume for simplicity that
$$\fint_{S^2}\exp(2f)\,d\mu_0=1$$
because, according to \eqref{gibson}, this implies that
$$\lim_{q_0\to\infty}\hat q_0-q_0=0.$$
  If $\hat K=1$,  then $\hat g$ is a round metric on $S^2$ and hence there is a conformal transformation $\gamma$ of
$S^2$ for which $\gamma^* \hat g=g_0$.  From Property a) we know  that $\hat g=\exp(2f)g_0$ and so $\gamma^*g_0=\exp(-2f\circ T)g_0.$ This conformal transformation induces an isometry of hyperbolic space which we still denote by $\gamma$.  The relationship between the radial functions $r(x)$ and $s(x)$ is determined by
\begin{equation*}
|s(x)+f\circ \gamma(x)-r\circ\gamma(x)|\leq C\exp(-r(x))
\end{equation*}
for some constant $C$. This implies that for all  $x$ in $\Sigma(q_0)$
$$|w(x)+\hat q_0-q_0|\leq C\exp(-q_0),$$
and thus
$$\lim_{q_0\to\infty}w=0.$$

\end{proof}

\section{Long time existence for inverse mean curvature flow on asymptotically hyperbolic $3$-manifolds}\label{long}

In this section the ambient manifold will be an Anti--de Sitter--Schwarzschild metric $(S^2\times[s_0,+\infty), g_m)$ with mass $m>0$. 

A sphere $\Sigma_0$ satisfies hypothesis $(H)$ if we can find constants $(Q_j)_{j\in\N},\,\varepsilon_0,$ and $\delta_0$ for which
\begin{equation*}
(H)\qquad\left \{ \begin{aligned}
					&|H| \geq \varepsilon_0 \quad\mbox{and}\quad \ar |H^2-4|\leq Q_0,\\
					&\langle\nu,\partial_r \rangle \geq \varepsilon_0\quad\mbox{and} \quad\langle\nu,\partial_r \rangle\geq 1-|\Sigma_0|^{-1}Q_1,\\
					& \n^2 \leq(1/4-\delta_0)H^2\quad\mbox{and}\quad\ar^2\n^2\leq Q_2,\\
					 &\sup_{\Sigma_0}|\nabla^n A|^2\leq Q_{n+2}|\Sigma_0|^{-(n+2)}\quad\mbox{for all } n\geq 1,\\
					 &\mbox{$\Sigma_0$ bounds a compact region containing $S^2\times \{s_0\}$.}
				\end{aligned}
\right.
\end{equation*}
 
 Recall that $\overline r_0$ and $\underline r_0$ denotes, respectively, the outer radius and the inner radius of $\Sigma_0$.
 
\begin{thm}\label{imcf}
Assume that $\Sigma_0$ satisfies $(H)$.

There is a constant $\underline r=\underline r((Q_j)_{j\in\N},\varepsilon_0,\delta_0,\overline r_0-\underline r_0,m)$ such that if $\underline{r}_0\geq\underline r$ then
the inverse mean curvature flow $(\Sigma_t)$ with initial condition $\Sigma_0$ exists for all time and has the
following properties:
\begin{enumerate}
\item [(i)] There is a positive constant $C=C((Q_j)_{j\in\N},\varepsilon_0,\delta_0,\overline r_0-\underline r_0,m)$ such that the mean curvature of $\Sigma_t$ satisfies
$$H\geq C$$
and, for some other constant $C=C((Q_j)_{j\in\N},\varepsilon_0,\delta_0,\overline r_0-\underline r_0,m),$
$$\ar|H^2-4|\leq C\exp(-t);$$
\item[(ii)]
For every $n\geq 0$ and $k\geq 1$ there is a constant 
$$C=C((Q_j)_{j\in\N},\varepsilon_0,\delta_0,\overline r_0-\underline r_0,m)$$
such that
$$\ar^{n+2}|\partial_t^k\, \nabla^n A|^2\leq C \exp(-(n+2)t)$$
and
 $$\ar^{n+2}| \nabla^n A|^2\leq C \exp(-(n+2)t)\quad\mbox{for n} \geq 1;$$

\item [(iii)] The surfaces $\Sigma_t$ can be described as
$$\Sigma_t=\{(\hat r_t+f_t(\theta),\theta)\,|\,\theta \in S^2\},$$
where $\hat r_t$ is such that 
$$|\Sigma_t|=4\pi\sinh^2 \hat r_t.$$
Moreover, the functions $f_t$ converge to a smooth function $f_{\infty}$ defined on $S^2$.
\item [(iv)]For every $n\geq 0$ and $k\geq 1$ there is a constant 
$$C=C((Q_j)_{j\in\N},\varepsilon_0,\delta_0,\overline r_0-\underline r_0,m)$$ such that
$$\ar^n|\nabla^n f_t|^2\leq C\exp(-nt), \quad|\partial_t^k\, f_t|\leq C\exp(-t),$$
and
$$\ar^n|\partial_t^k\, \nabla^n f_t|^2\leq C\exp(-nt)\quad\mbox{for n} \geq 1.$$
\end{enumerate}
\end{thm}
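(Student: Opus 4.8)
The plan is to prove all of (i)--(iv) simultaneously by a continuity (bootstrap) argument, fixing the free parameter $\underline r_0$ to be large only at the very end in order to close the estimates. Short--time existence of a smooth flow, together with the standard continuation criterion (the smooth flow persists as long as $H$ stays bounded away from $0$ and $\infty$ and all curvatures and their covariant derivatives stay bounded), is available in the region where $H>0$, and hypothesis $(H)$ guarantees that $\Sigma_0$ is suitably starshaped and has $H>0$. Let $T^{*}$ be the supremum of the times $T$ for which the flow exists smoothly on $[0,T)$ and satisfies the \emph{weakened} bounds $H\ge\varepsilon_0/2$, $\n^{2}\le(1/4-\delta_0/2)H^{2}$, $\langle\nu,\partial_r\rangle\ge\varepsilon_0/2$ (so that $\Sigma_t$ remains a graph over the coordinate spheres) together with $\overline r_t-\underline r_t\le 2(\overline r_0-\underline r_0)+C_0$, $\ar|H^{2}-4|\le 2Q_0\exp(-t)$, and $\ar^{\,n+2}|\nabla^{n}A|^{2}\le 2Q_{n+2}\exp(-(n+2)t)$ for $n\ge 1$. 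I would then establish the \emph{sharp} forms of all these inequalities on $[0,T^{*})$ and check that, once $\underline r_0$ exceeds a threshold depending only on $(Q_j)_{j\in\N},\varepsilon_0,\delta_0,\overline r_0-\underline r_0$ and $m$, they strictly improve the weakened ones; the continuation criterion then forces $T^{*}=+\infty$, which is the theorem.

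Two preliminary observations organise everything. First, $\frac{d}{dt}|\Sigma_t|=\int_{\Sigma_t}H\cdot H^{-1}\,d\mu=|\Sigma_t|$, so $|\Sigma_t|=\ar\exp t$ exactly; combined with the graphical description and the bound on $\overline r_t-\underline r_t$ this gives $\underline r_t=\tfrac12\log\ar+\tfrac t2+O(1)$, the $O(1)$ depending only on $\overline r_0-\underline r_0$. Hence along the flow the ambient Anti--de Sitter--Schwarzschild metric is exponentially close to $\H^{3}$: writing $Rc(\nu,\nu)=-2+E$, the estimate \cite[Lemma 3.1]{neves2} gives $|E|\le C\exp(-3r)\le C\exp(-3\underline r_0)\exp(-3t/2)$, and every other discrepancy between $g_m$ and the hyperbolic metric entering the evolution equations is of the same order. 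This is the ``careful bookkeeping'' alluded to in the introduction: these errors are not merely small but carry a prefactor $\exp(-3\underline r_0)$ and are integrable in $t$, so they never accumulate and can be absorbed with room to spare. Second, the evolution of the mean curvature under inverse mean curvature flow, using $|A|^{2}=\tfrac12 H^{2}+\n^{2}$, is
\begin{align*}
\partial_t H&=\frac{\Delta H}{H^{2}}-\frac{2|\nabla H|^{2}}{H^{3}}-\frac{|A|^{2}+Rc(\nu,\nu)}{H}\\
&=\frac{\Delta H}{H^{2}}-\frac{2|\nabla H|^{2}}{H^{3}}-\frac{H}{2}+\frac{2}{H}-\frac{\n^{2}+E}{H},
\end{align*}
whose reaction part is a small perturbation of the scalar ODE $\dot h=-h/2+2/h$; the equilibrium $h=2$ is attracting with linearised rate $1$. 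Feeding the weakened bound $\n^{2}\le C\ar^{-2}\exp(-2t)$ and $|E|\le C\exp(-3\underline r_0)\exp(-3t/2)$ in as forcing, and using $\ar|H^{2}-4|(0)\le Q_0$, a maximum principle for $H-2$ yields $\ar|H-2|\le C(Q_0,m)\exp(-t)$, hence the sharp form of (i); the dependence on $\ar$ comes out correctly precisely because both the initial datum and the forcing carry the right powers of $\ar$, and $C$ does not depend on $\ar$.

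The heart of the argument, and the step I expect to be the \textbf{main obstacle}, is to propagate the pinching $\n^{2}<\tfrac14 H^{2}$ (and simultaneously the graphicality) with constants that do not see the location of $\Sigma_t$. One computes the evolution of $\n^{2}$ --- or, better, of $\n^{2}H^{-2}$ --- under the flow: the leading term is $H^{-2}\Delta$ acting on the quantity, the first--order terms have a favourable sign as in Huisken's computation, and the reaction terms are cubic and quartic in $A$ together with ambient--curvature contributions. Since $|A|^{2}$ is already close to $2$ and $\Sigma_t$ is nearly umbilic, the reaction terms are dominated by the genuinely negative contribution inherited from $-|A|^{2}/H$ in the $H$--equation, plus the exponentially small ambient errors, so a maximum principle keeps $\n^{2}H^{-2}$ below $\tfrac14-\delta_0/2$ provided $\underline r_0$ is large. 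The delicate point is to verify that none of the constants produced by those reaction terms secretly depends on $\underline r_0$; this is exactly why the curvature estimates of Section \ref{aha} (Proposition \ref{nelsa} b), c)) are phrased scale--invariantly, in terms of $|\Sigma(q_0)|^{n+2}|\nabla^{n}A|^{2}$. Graphicality and the bound on $\overline r_t-\underline r_t$ are handled in parallel by an evolution inequality for $1-\langle\nu,\partial_r\rangle$ (equivalently $\dr^{2}$), whose reaction terms are again controlled by the ambient errors; alternatively, once $\Sigma_t=\{(\hat r_t+f_t(\theta),\theta)\}$, one passes directly to the quasilinear scalar equation satisfied by $f_t$ on $S^{2}$, which after rescaling by $\sinh\hat r_t$ is an exponentially small perturbation of the heat equation, so $f_t$ stays bounded in every $C^{k}(S^{2})$ and $\overline r_t-\underline r_t\le C(\overline r_0-\underline r_0)$. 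The higher--derivative bounds (ii), (iv) then follow from the usual Bernstein and interpolation machinery for geometric flows, and the decay exponents $\exp(-(n+2)t)$ and $\exp(-nt)$ are exactly the ratios $(\ar/|\Sigma_t|)^{n+2}=\exp(-(n+2)t)$ etc. produced when one converts the scale--invariant bounds of Proposition \ref{nelsa} into the $\ar$--normalised form demanded by the statement.

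Finally, for (iii): with the uniform bounds in hand, $\Sigma_t$ is a graph $\{(\hat r_t+f_t(\theta),\theta)\}$ with $|\Sigma_t|=4\pi\sinh^{2}\hat r_t$ and $f_t$ bounded in every $C^{k}(S^{2})$. Since a point of $\Sigma_t$ at radius $r$ moves purely normally, $\partial_t f_t=H^{-1}\langle\nu,\partial_r\rangle-\dot{\hat r}_t$ up to terms of order $\exp(-2r)$, and both $H^{-1}\langle\nu,\partial_r\rangle$ and $\dot{\hat r}_t=\tfrac12\tanh\hat r_t$ tend to $\tfrac12$ at rate $\exp(-t)$ by (i); hence $|\partial_t f_t|\le C\exp(-t)$, the analogous bounds for $\partial_t^{k}f_t$ follow by differentiating the equation, and $t\mapsto f_t$ is Cauchy in $C^{0}(S^{2})$. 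Interpolating against the uniform higher--derivative bounds upgrades the convergence to $C^{\infty}(S^{2})$, and the limit is the asserted $f_\infty$. This closes the bootstrap and completes the proof.
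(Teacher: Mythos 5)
Your proposal is correct and takes essentially the same route as the paper: maximum-principle estimates for the mean curvature, for the traceless second fundamental form (more precisely for $\n^2 H^{-2}$), and for graphicality, followed by Shi-type interior estimates for the higher derivatives and a decay estimate on $\partial_t f_t$ to deduce convergence; the continuity/bootstrap framing is organizationally different but the underlying inequalities are the same ones the paper proves directly. The one place your sketch and the paper diverge technically is the lower bound on $H$ and on $\langle\nu,\partial_r\rangle$: you propose a direct maximum principle on $H-2$, whereas the paper runs it on $\alpha_t=\exp(-t/2)\langle\phi(r)\partial_r,\nu\rangle\,H$, which packages the support-function bound and the $H$ bound into one scalar quantity, and this is a cleaner way to avoid circularity with the bootstrap assumption on $\n^2$; and the step you flag as the ``main obstacle'' --- the sign of the gradient terms in the evolution of $\n^2H^{-2}$ --- is exactly the content of the paper's Lemma~\ref{hearts}, which carries out the critical-point/eigenbasis analysis you allude to as ``Huisken's computation,'' so your deferral there is justified rather than a gap.
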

We essentially adapt to our setting some of the ideas used in the work of Huisken--Ilmanen \cite{Huisken4} and Claus Gerhardt \cite{gerhardt} on smooth solutions to inverse mean curvature flow. We could have been more precise regarding how the constants depend on  $(Q_j)_{j\in\N}$ but this version of the theorem suffices for our purposes. The important point is that the estimates do not depend on $\underline r_0$ (only on $\overline r_0-\underline r_0$).
\begin{proof}
During the first part of this proof,  given any geometric quantity $T$ defined on $\Sigma_t$ we use the notation 
$$T=O(\exp(-kr))$$
whenever there is a constant $C=C(m)$ such that
$$|T|\leq C\exp(-kr).$$

Because $H>0$ we have short-time existence for the flow. 
Denoting by $\Sigma^m_t:=\{|x|=r^m_t\}$ the solution to inverse mean curvature flow with initial condition  $\{|x|=t_0\}$ we know that
$$|\Sigma^m_t|=|\Sigma^m_0|\exp(t)$$
and thus we can find a constant $K=K(m)$ such that
$$t/2-K\leq r^m_t-t_0\leq t/2+K.$$ Because two solutions that are initially disjoint must remain disjoint \cite[Theorem 2.2]{Huisken2},  we have that for some constant 
$K=K(m)$
\begin{equation}\label{rosa}
t/2+\underline r_0-K \leq \underline r_t\leq \overline r_t\leq t/2+\overline{r_0} +K\quad\mbox{and}\quad \underline r_t\geq \underline r_0.
\end{equation}
Therefore, we can find $K=K(m,\overline r_0-\underline r_0)$ for which
$$K^{-1}|\Sigma_0|\exp(t)\leq \exp(2r)\leq K|\Sigma_0|\exp(t).$$

We now derive the evolution equations that will be needed later on. We use the
notation
$$B_{ij}\thickapprox C_{ij}$$
when $B_{ij}$ and $C_{ij}$ have the same trace-free part.

Set
$$X:={\phi(r)}\partial_r\quad\mbox{and}\quad\beta_t:=\exp(-t/2)\langle X,\nu\rangle,$$
where the function $\phi$ is such that $g_m=dr^2+\phi(r)^2g_0$ and $\nu$ is the exterior normal vector to $\Sigma_t$. 

\begin{lemm}\label{evol1}
The following evolution equations hold.

\begin{itemize}
	\item [a)]
	\begin{equation*}
		\frac{d\beta_t}{dt}=\frac{\Delta\beta_t}{H^2}+\left(\frac{|A|^2}{H^2}-\frac{1}{2}\right)\beta_t
		+|\partial_r^{\top}|^2\left(\frac{3m}{2\sinh^3 r}+O(\exp(-5r)\right)\frac{\beta_t}{|H|^2};
	\end{equation*}
	\item[b)]$$\frac{d H}{dt} = \frac{\Delta H}{H^2}-(|A|^2+Rc(\nu,\nu))\frac{1}{H}-\frac{2|\nabla H|^2}{|H|^3};$$
	\item[c)]
    	\begin{multline*}
        		\frac{d \tf}{dt}\thickapprox \frac{\Delta \tf}{H^2}-\frac{2\nabla H\otimes\nabla H}{H^3}-\tf-\frac{2\tf^2}{H}\\+
        		\left(\frac{\lvert \tf \rvert
        		^2}{H^2}-\frac{H^2+2Rc(\nu,\nu)+O(\exp(-3r))}{2H^2}\right)\tf_{}\\
        		+\left(\frac{1}{H}+\frac{1}{H^2}\right)O(\exp(-3r));
    	\end{multline*}
	\item[d)]
	\begin{multline*}
		\frac{d\n^2}{dt}\leq\frac{\Delta \n^2}{H^2}+2\left(\frac{\lvert \tf \rvert
		^2}{H^2}-\frac{H^2+2Rc(\nu,\nu)+O(\exp(-3r)}{2H^2}\right)
        		\lvert \tf \rvert ^2\\-2\n^2-2\frac{\lvert\nabla\tf\rvert^2}{H^2}
         	-4\frac{\langle \nabla H \otimes\nabla H, \tf\rangle}{H^3}\\
         	+ \left(\frac{\n}{H^2}+\frac{\n}{H}\right)
         	O(\exp(-3r)).
	\end{multline*}
\end{itemize}

\end{lemm}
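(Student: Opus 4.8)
I would obtain all four identities from the standard evolution equations for a hypersurface moving by $\partial_t x=F\nu$, specialized to $F=H^{-1}$, together with the two special features of the present setting: $\Sigma_t$ is two--dimensional, and $g_m=dr^2+\phi(r)^2g_0$ is an \emph{exact} warped product, with $\phi''=\phi+m/(2\phi^2)$ and all of whose sectional curvatures differ from $-1$ by exactly $O(m/\phi^{3})=O(m/\sinh^3 r)$. Part b) is then immediate: under $\partial_t x=F\nu$ one has $\partial_t H=-\Delta F-(|A|^2+Rc(\nu,\nu))F$, and with $F=H^{-1}$ the expansion $-\Delta(H^{-1})=\Delta H/H^2-2|\nabla H|^2/H^3$ produces exactly the stated equation.

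\textbf{Part a).} Since $g_m$ is warped, $X=\phi(r)\partial_r$ is the gradient of $u(r)=\int^r\phi$ and satisfies $\bar D^2u=\phi'\,g_m$ identically ($\bar D$ the ambient connection); i.e. $X$ is a gradient conformal field with factor $\phi'$, and this is the only way the ambient metric enters beyond its curvature. Restricting $u$ to $\Sigma_t$, the Gauss equation gives $\nabla^2(u|_{\Sigma_t})=\phi'g_t-\langle X,\nu\rangle A$, hence $\nabla\langle X,\nu\rangle=A(\nabla(u|_{\Sigma_t}),\cdot)$; differentiating once more and tracing with the Codazzi equation --- this is where the ambient curvature of $g_m$, and thus the $m/\sinh^3 r$ correction, enters --- yields $\Delta\langle X,\nu\rangle=\langle\nabla H,X^{\top}\rangle+\phi'H-|A|^2\langle X,\nu\rangle+E$ with $E$ the curvature error. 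On the other hand $\partial_t\nu=-\nabla(H^{-1})$ and conformality give $\partial_t\langle X,\nu\rangle=\phi'/H+\langle X^{\top},\nabla H\rangle/H^2$. Forming $\partial_t\beta_t-\Delta\beta_t/H^2$ with $\beta_t=\exp(-t/2)\langle X,\nu\rangle$, the pairs $\langle\nabla H,X^{\top}\rangle$ and $\phi'/H$ cancel, $\partial_t\exp(-t/2)=-\tfrac12\exp(-t/2)$ produces the $-\tfrac12\beta_t$, and what remains is $(|A|^2/H^2-\tfrac12)\beta_t$ together with $-\exp(-t/2)E/H^2$. Computing $E$ from the Schwarzschild--AdS curvature tensor --- whose offending components vanish along $\{\nu=\partial_r\}$ by spherical symmetry, so that $E$ is quadratic in $\partial_r^{\top}$ --- puts the remainder in the displayed form $\dr^{2}\bigl(3m/(2\sinh^3 r)+O(\exp(-5r))\bigr)\beta_t/H^2$. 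This is the hyperbolic analogue of the mass term in the Huisken--Ilmanen treatment of asymptotically flat Schwarzschild.

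\textbf{Parts c) and d).} I would write $A=\tf+\tfrac H2 g_t$ and start from the standard second fundamental form equation $\partial_t A_{ij}=-\nabla_i\nabla_j F+F(A^2)_{ij}+(\text{ambient curvature})$ with $F=H^{-1}$. The term $-\nabla_i\nabla_j(H^{-1})=H^{-2}\nabla_i\nabla_j H-2H^{-3}\nabla_i H\nabla_j H$ supplies both the $-2\nabla H\otimes\nabla H/H^3$ summand and, after using the Simons identity to trade $\nabla^2 H$ for $\Delta A$ plus cubic and ambient--curvature terms (and then $\Delta A\thickapprox\Delta\tf$ up to pure trace), the $\Delta\tf/H^2$ summand. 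In dimension two the identities $\tf_{ik}\tf^k_j=\tfrac12\n^2g_{ij}$ and the Cayley--Hamilton relation $(A^2)_{ij}=HA_{ij}-(\det A)g_{ij}$ collapse the cubic contributions into multiples of $\tf$ plus pure trace; the evolution of the metric $g_t$ contributes the solitary $-\tf$ and the cubic $-2\tf^2/H$; and the Simons curvature term for $g_m$, evaluated with $Rc(\nu,\nu)+2=O(\exp(-3r))$ and $\bar R_{i\nu j\nu}=-g_{ij}+O(\exp(-3r))$ from Section~\ref{aha} and \cite{neves2}, produces the $Rc(\nu,\nu)$ entry of the bracket together with the $O(\exp(-3r))$ errors. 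Projecting onto the trace--free part --- which is what $\thickapprox$ records --- gives c). Identity d) then follows by contracting c) with $2\tf^{ij}$, using $\tf^{ij}\Delta\tf_{ij}=\tfrac12\Delta\n^2-\g^2$, $\tf^{ij}(\tf^2)_{ij}=0$, and $2\tf^{ij}(-\tf_{ij})=-2\n^2$, with the bracket term passing through to $2(\text{bracket})\n^2$; the inequality sign in d) absorbs a manifestly nonpositive remainder.

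\textbf{Main obstacle.} The difficulty is bookkeeping rather than ideas. One must (i) track every ambient--curvature contribution of $g_m$ in the Codazzi step of a) and the Simons step of c)--d), and check that beyond the displayed $Rc(\nu,\nu)$ and $3m/(2\sinh^3 r)$ terms everything is genuinely $O(\exp(-3r))$ or $O(\exp(-5r))$ with the stated powers of $H$ in front; (ii) be scrupulous about the $(0,2)$ versus $(1,1)$ conventions for $A$ and $\tf$, so that the pure--trace pieces really cancel and the coefficients of $\tf$, $\tf^2/H$ and $\n^2$ come out exactly as stated; and (iii) --- though this matters chiefly for the later sections --- keep all implied constants dependent on $m$ alone. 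The conformal--field identity for $\Delta\langle X,\nu\rangle$ in a) and the two--dimensional algebra in c) are where an error would most easily slip in.
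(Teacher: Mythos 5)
Your proposal is correct and follows essentially the same route as the paper: part b) is the standard Huisken--Ilmanen evolution of $H$ under $\partial_t x=H^{-1}\nu$; part a) is obtained from the conformal-Killing property $\bar D X=\phi'\,\mathrm{id}$ of $X=\phi\partial_r$, the Codazzi step producing $Rc(\nu,X^{\top})$, and the observation that this Ricci term is quadratic in $\partial_r^{\top}$ with leading coefficient $3m/(2\sinh^3 r)$ (the paper computes this explicitly via the curvature formulas of \cite{tian}); and parts c)--d) come from the standard second-fundamental-form evolution plus Simons' identity, with the two-dimensional reductions $\tf_{ik}\tf^k_{\ j}=\tfrac12\n^2\delta_{ij}$ and $\langle\tf^2,\tf\rangle=0$, then contracting c) with $2\tf^{ij}$ to get d). The only caveat is that you outline rather than carry out the curvature bookkeeping in a) and the Simons-identity bookkeeping in c), which is where the paper spends its effort; but the plan is sound and matches.
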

\begin{proof}
For every vector $Y$ we have that
    $$D_Y X=\phi'(r)Y$$
and this implies that, using local coordinates $(y_1, y_2)$ for $\Sigma_t$,
$$\langle\nabla\beta_t,\partial_i\rangle =\exp(-t/2)A(\partial_i,X^{\top}),\quad i=1,2$$
and
\begin{multline*}
\exp(t/2)\Delta
\beta_t=\sum_{i}\left(\nabla_{\partial_i}A\right)(\partial_i,X^{\top})+A(\partial_i,\nabla_{\partial_i}X^{\top})\\
=\langle \nabla H, X\rangle +Rc(\nu, X^{\top})+\phi'H-\langle X,\nu\rangle|A|^2.
\end{multline*}
 Moreover
$$D_{\partial_t}\nu=\nabla H/H^2, \quad D_{\partial_t}X=\phi'\partial_t,$$
and so
$$\frac{d\beta_t}{dt}=\frac{\phi'}{H}+\frac{\langle\nabla H, X\rangle}{H^2}-\frac{\beta_t}{2}.$$
Therefore
$$
\frac{d\beta_t}{dt}=\frac{\Delta\beta_t}{H^2}+\left(\frac{|A|^2}{H^2}-\frac{1}{2}\right)\beta
-\exp(-t/2)\frac{Rc(\nu,X^{\top})}{|H|^2}.
$$
Note that denoting by $e_1, e_2$ a $g_m$-orthonormal basis for the coordinates spheres
$$
\langle\nu,X^{\top}\rangle=0\,\Rightarrow\,\langle\nu,\partial_r\rangle\langle\partial_r,X^{\top}\rangle
= -\sum_i\langle\nu,e_i\rangle\langle e_i,X^{\top}\rangle
$$
and hence, we obtain from \cite[Lemma 3.1 (iii)]{tian} that
\begin{multline*}
Rc(\nu,X^{\top})=\sum_i\langle\nu,e_i\rangle
Rc(e_i,X^{\top})+\langle \nu,\partial_r\rangle Rc(\partial_r,X^{\top})\\
=\left(\frac{m}{2\sinh^3 r}+O(\exp(-5r)\right)\sum_i\langle\nu,e_i\rangle \langle e_i,X^{\top}\rangle\\
-\left(\frac{m}{\sinh^3 r}+O(\exp(-5r)\right)\langle\nu,\partial_r\rangle\langle\partial_r,X^{\top}\rangle\\
=|\partial_r^{\top}|^2\left(-\frac{3m}{2\sinh^3 r}+O(\exp(-5r)\right)\langle\nu,X\rangle.
\end{multline*}
The second evolution equation was derived in \cite[Section 1]{Huisken2}.

We now prove the third identity.
 From \cite[Theorem 3.2]{Huisken1} it
follows that
    assuming normal coordinates around a point $p$
    \begin{equation*}
        \frac{d \tf_{ij}}{dt}\thickapprox\frac{dA_{ij}}{dt}-A_{ij}\thickapprox
        \frac{\nabla_i\nabla_j H}{H^2}-\frac{2\nabla_i H\nabla_j H}{H^3}+\frac{\tf_{ik}\tf_{kj}-R_{\nu i\nu j}}{H}.
    \end{equation*}
    Arguing like in the proof of Simons' identity for the Laplacian of the second fundamental form $A$ (see for instance
    \cite{Huisken1}), one can see that
    \begin{multline*}
        \Delta \tf_{ij} \thickapprox \nabla_i\nabla_j H+H\tf_{im}\tf_{mj}+\tf_{ij}H^2/2-\tf_{ij}\n^2+H{R}_{\nu i \nu j}\\
        -{R}_{\nu\nu}\tf_{ij}+{R}_{kikm}\tf_{mj}+{R}_{kjkm}\tf_{im}+{R}_{kijm}\tf_{km}+{R}_{mjik}\tf_{km}\\
        +{D}_k {R}_{\nu j i k}+ {D}_i {R}_{\nu k j k}.
    \end{multline*}
    Because the metric $g_m$ satisfies
    \begin{align*}
        {R}_{stuv} & =-(\delta_{su}\delta_{tv}-\delta_{sv}\delta_{tu})+O(\exp({-3r}))\\
         D_q R_{stuv}&=O(\exp(-3r))
    \end{align*}
    it follows that
        \begin{multline*}
            {R}_{kikm}\tf_{mj}+{R}_{kjkm}\tf_{im}+{R}_{kijm}\tf_{km}+{R}_{mjik}\tf_{km}=-4\tf_{ij}+\tf_{ij}O(\exp(-3r))\\
            =2Rc(\nu,\nu)\tf_{ij}+\tf_{ij}O(\exp(-3r)),
        \end{multline*}
        $${R}_{\nu i \nu j} =-g_{ij}+O(\exp(-3r)),$$
    and therefore
    \begin{multline*}
        \frac{d \tf_{ij}}{dt}\thickapprox \frac{\Delta \tf_{ij}}{H^2}-\frac{2\nabla_i H\nabla_j H}{H^3}\\+
        \left(\frac{\lvert \tf \rvert
        ^2}{H^2}-\frac{H^2+2Rc(\nu,\nu)+O(\exp(-3r))}{2H^2}\right)\tf_{ij}\\
        +\left(\frac{1}{H}+\frac{1}{H^2}\right)O(\exp(-3r)).
    \end{multline*}
    Using the formula
    $$\frac{d\tf}{dt}(\partial_{i},\partial_j)=\frac{d \tf_{ij}}{dt}-\langle D_{\partial_t}\partial_i,\partial_k\rangle\tf_{kj}-\langle D_{\partial_t}\partial_j,\partial_k\rangle\tf_{ik}$$
    we obtain Lemma \ref{evol1} c). The last identity follows from
    $$\frac{d\n^2}{dt}=2\left\langle\frac{d\tf}{dt},\tf\right\rangle$$
    and
    $$\langle\tf^2,\tf\rangle=0.$$
\end{proof}

 We now argue that we can choose $\underline r=\underline r(m)$ and a positive constant $C=C(\varepsilon_0,\overline r_0-\underline r_0, m)$ such that if
  $\underline r_0\geq \hat {r}$, then
\begin{equation}\label{strokes}
    H\geq C\quad\mbox{and}\quad \langle \nu,\partial_r\rangle\geq C\exp(\underline r_0-\overline r_0)
\end{equation}
 while the solution exists.

 Choosing $\underline r$
 large enough so that for all $r\geq \underline r$ the term
 $$\frac{3m}{2\sinh^3 r}+O(\exp(-5r))$$
in the equation of Lemma \ref{evol1} a) is positive, we obtain that
$$\frac{d\beta_t}{dt}\geq\frac{\Delta\beta_t}{H^2}$$
while $\beta_t$ is nonnegative and thus  $\beta_t\geq \min\beta_0>0.$
Note that $\phi(r)$ grows like $\exp(r)$ and so, for some constant $C=C(m)$, 
$$\beta_t\leq C\exp(\overline r_0)\langle\partial_r, \nu\rangle.$$
This implies the desired
bound  for $\langle \nu,\partial_r\rangle$.

Set $\alpha_t:=\beta_t H$. Because
 $$Rc(\nu,\nu)=-2+O(\exp(-3r)),$$
the previous lemma implies that, provided we choose $\underline r$ sufficiently large,

\begin{align*}
	\frac{d\alpha_t}{dt} & =\frac{\Delta \alpha_t}{H^2}-\frac{2\langle\nabla \alpha_t,\nabla
	H\rangle}{H^3}+(4+O(\exp(-3r))-H^2) \frac{\alpha_t}{2H^2}\\
	& \geq \frac{\Delta \alpha_t}{H^2}-\frac{2\langle\nabla \alpha_t,\nabla
	H\rangle}{H^3}+(3-H^2) \frac{\alpha_t}{2H^2}
\end{align*}

 Because  $\alpha_t\leq \sqrt{3}(\min\beta_0)$ implies that
$H^2\leq 3$, it follows from the maximum principle that $\alpha_t\geq \min\{\sqrt{3}(\min\beta_0), \min \alpha_0 \}$ for all $t$ and thus we can use the inequalities in \eqref{rosa} in order to obtain the desired bound for the mean curvature.

\begin{lemm}\label{hearts}We can find constants $\underline r=\underline r(\varepsilon_0,\delta_0,\overline r_0-\underline r_0,m)$ and $C=C(\varepsilon_0,\delta_0,\overline r_0-\underline r_0,m)$ such that if 
$\underline{r}_0\geq\underline r$, then 
$$|\Sigma_0||H^2-4|\leq C\left(|\Sigma_0|\sup_{\Sigma_0}(|H^2-4|+\n^2)+\exp(-\underline r_0)\right)\exp(-t)$$
and
$$|\Sigma_0|^2\n^2\leq C\left(|\Sigma_0|^2\sup_{\Sigma_0}\n^2+\exp(-2\underline r_0)\right)\exp(-2t)$$
while the solution exists.
\end{lemm}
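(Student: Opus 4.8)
The plan is to apply the parabolic maximum principle to the rescaled quantities
$$v_t:=|\Sigma_t|\,(H^2-4)\qquad\text{and}\qquad w_t:=|\Sigma_t|^2\,\n^2,$$
exploiting that along \imc flow one has $\frac{d}{dt}|\Sigma_t|=|\Sigma_t|$. First I would record the evolution of $H^2-4$: from Lemma \ref{evol1} b), together with $|A|^2=\n^2+H^2/2$ and the already-used fact $Rc(\nu,\nu)=-2+O(\exp(-3r))$, one gets
$$\frac{d(H^2-4)}{dt}=\frac{\Delta(H^2-4)}{H^2}-(H^2-4)-2\n^2-\frac{6|\nabla H|^2}{H^2}+O(\exp(-3r)).$$
The point of the rescaling is that the dissipative term $-(H^2-4)$ here, and $-2\n^2$ in Lemma \ref{evol1} d), cancel exactly the contribution of the area growth $\frac{d}{dt}|\Sigma_t|=|\Sigma_t|$; thus
$$\frac{dv_t}{dt}=\frac{\Delta v_t}{H^2}-2|\Sigma_t|\,\n^2-\frac{6|\Sigma_t||\nabla H|^2}{H^2}+|\Sigma_t|\,O(\exp(-3r)),$$
and a parallel computation from Lemma \ref{evol1} d), using again $Rc(\nu,\nu)=-2+O(\exp(-3r))$, gives
$$\frac{dw_t}{dt}\leq\frac{\Delta w_t}{H^2}+\frac{2w_t^2}{H^2|\Sigma_t|^2}-\frac{H^2-4}{H^2}w_t-\frac{2|\Sigma_t|^2|\nabla\tf|^2}{H^2}-\frac{4|\Sigma_t|^2\langle\nabla H\otimes\nabla H,\tf\rangle}{H^3}+|\Sigma_t|^2\Big(\frac{\n}{H}+\frac{\n}{H^2}\Big)O(\exp(-3r)).$$

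Next I would dispose of the error and gradient terms. Using \eqref{rosa} and \eqref{strokes}, $\exp(2r)$ is comparable to $|\Sigma_0|\exp(t)$ and $\exp(-r)\leq C(\overline r_0-\underline r_0,m)\exp(-\underline r_0)\exp(-t/2)$, so $|\Sigma_t|\,O(\exp(-3r))$ is at most $C\exp(-\underline r_0)\exp(-t/2)$ and, after Young's inequality, $|\Sigma_t|^2\,\n\,O(\exp(-3r))$ is at most $\varepsilon w_t+C\exp(-2\underline r_0)\exp(-t)$; this is exactly where the exponents of $|\Sigma_0|$ and the factors $\exp(-\underline r_0)$, $\exp(-2\underline r_0)$ on the right-hand side come from, and where it is crucial that only $\overline r_0-\underline r_0$ (not $\underline r_0$) enters the constants. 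The $\nabla H$ terms are absorbed through the Codazzi identity $|\nabla H|\leq C_0|\nabla\tf|+O(\exp(-3r))$: once $\underline r_0$ is large the bounds of hypothesis $(H)$ make $\n$ small compared with $H$, so $\frac{4|\Sigma_t|^2\langle\nabla H\otimes\nabla H,\tf\rangle}{H^3}$ is dominated by $\frac{2|\Sigma_t|^2|\nabla\tf|^2}{H^2}$ up to a further $|\Sigma_t|^2O(\exp(-6r))$ error, so the whole gradient contribution to the upper bound for $w_t$ is discarded; the term $-\frac{6|\Sigma_t||\nabla H|^2}{H^2}$ in the $v_t$-equation is treated the same way, while $-2|\Sigma_t|\n^2=-2w_t/|\Sigma_t|$ is kept for the coupling.

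Then comes the maximum principle together with a continuity argument. For the upper bound on $v_t$ every term except $\frac{\Delta v_t}{H^2}$ and $|\Sigma_t|O(\exp(-3r))$ has a favorable sign, so $\frac{d}{dt}\max_{\Sigma_t}v_t\leq C\exp(-\underline r_0)\exp(-t/2)$, and integrating then dividing by $|\Sigma_t|=|\Sigma_0|\exp(t)$ gives the stated bound for $|H^2-4|$. For $w_t$ and for the lower bound on $v_t$ I would run a bootstrap: the self-interaction $\frac{2w_t^2}{H^2|\Sigma_t|^2}$ has a time-integrable coefficient $\frac{2w_t}{H^2|\Sigma_t|^2}$, and since $(H)$ keeps $\n^2/H^2$ below the blow-up threshold of the resulting Riccati-type inequality, the maximum principle returns $w_t\leq 2|\Sigma_0|^2\sup_{\Sigma_0}\n^2+C\exp(-2\underline r_0)$; feeding this into $-2w_t/|\Sigma_t|$ in the $v_t$-equation (which then integrates to something $\leq C\sup w/|\Sigma_0|$) and into $-\frac{H^2-4}{H^2}w_t$ (whose coefficient is now small) closes the argument on $[0,\infty)$ by continuity.

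The main obstacle is precisely this coupling: $v_t$ enters the equation for $w_t$ through $-\frac{H^2-4}{H^2}w_t$, which has the wrong sign when $H^2<4$, and $w_t$ re-enters the lower-bound equation for $v_t$ through $-2|\Sigma_t|\n^2$, while the $\nabla H$ terms are only controllable once $\tf$ is known to be small — so the two estimates cannot be proven in isolation and must be run simultaneously. The scheme works only because hypothesis $(H)$ together with $\underline r_0$ large forces $\n^2$ and $|H^2-4|$ to be quantitatively small, turning the bad coupling coefficients — and the Riccati term — into harmless multipliers; a secondary but pervasive point is the bookkeeping needed to check that, after multiplying the $O(\exp(-3r))$ terms by powers of $|\Sigma_t|$, every surviving constant depends only on $\overline r_0-\underline r_0$ and $m$ and not on $\underline r_0$ itself.
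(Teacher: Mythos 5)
Your overall plan---maximum principle on a rescaled $\n^2$ and on $|\Sigma_t|(H^2-4)$, with a bootstrap between the two and careful tracking of how the $O(\exp(-3r))$ errors become $\exp(-\underline r_0)$ factors after multiplication by powers of $|\Sigma_t|$---is the right shape, and your derivation of the evolution equation for $H^2-4$ and the way $\frac{d}{dt}|\Sigma_t|=|\Sigma_t|$ cancels the dissipative term is correct. The mechanism for the second bound (once the $\n^2$ decay is known, feed it into the $H^2-4$ equation; for the lower bound the $|\nabla H|^2$ term either vanishes at a minimum of $v_t$ or, as the paper does, is rewritten as a first-order $\langle\nabla\phi_t,\nabla H\rangle$ term with $\phi_t=\exp(t)(H^2-4)$) also goes through.

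The genuine gap is in your treatment of the gradient terms $-\frac{2|\nabla\tf|^2}{H^2}-\frac{4\langle\nabla H\otimes\nabla H,\tf\rangle}{H^3}$ in the $\n^2$ evolution. You propose to discard them by estimating $|\langle\nabla H\otimes\nabla H,\tf\rangle|\leq\n|\nabla H|^2$ and then using Codazzi, $|\nabla H|^2\leq C|\nabla\tf|^2+O(\exp(-6r))$. But the Codazzi constant is not $1$: in two dimensions the identity $\tfrac12\nabla_i H=\sum_j\nabla_j\tf_{ij}+Rc(\nu,e_i)$ gives at best $|\nabla H|^2\leq 4|\nabla\tf|^2+O(\exp(-6r))$, so the positive contribution is bounded below only by roughly $\frac{16\n}{H}\cdot\frac{|\nabla\tf|^2}{H^2}$. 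Under hypothesis $(H)$ the ratio $\n/H$ is only bounded by $\sqrt{1/4-\delta_0}$, which is close to $1/2$, not small, so $\frac{16\n}{H}$ can be close to $8$---far larger than the $2$ that the good term $-\frac{2|\nabla\tf|^2}{H^2}$ provides. The gradient contribution therefore cannot be discarded by this crude comparison; it has the wrong sign and overwhelms the damping, and your Riccati analysis never gets off the ground.

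The paper avoids this by making a different choice of quantity and doing a sharper pointwise computation. It works with the \emph{ratio} $\alpha_t:=\n^2 H^{-2}$ rather than with $|\Sigma_t|^2\n^2$. The crucial payoff is that at a critical point $p$ of $\alpha_t$ the gradients of $\n^2$ and $H$ are aligned: $\nabla\n^2=2\n^2 H^{-1}\nabla H$, which together with $\n H^{-1}\nabla H=\sqrt2\,\nabla\tf(v_1,v_1)$ in an eigenbasis $\{v_1,v_2\}$ for $\tf$ at $p$ lets one expand $|\nabla\tf|^2$ and $\langle\nabla H\otimes\nabla H,\tf\rangle$ exactly in terms of $|\nabla H|^2$ and $\alpha_t$. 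The result is the precise identity
$$Q(p)=4\frac{|\nabla H|^2}{H^4}\left(\alpha_t(p)-\frac14\right)+O(\exp(-3r))\text{-errors},$$
so the hypothesis $\alpha_0\leq 1/4-\delta_0$ makes the gradient package \emph{genuinely nonpositive} at critical points, rather than merely controlled up to a constant. This is the step your proof is missing: without it one has no usable sign on the gradient terms, and without that sign the maximum principle does not close. Your choice of $w_t=|\Sigma_t|^2\n^2$ does not produce the same cancellation, because a critical point of $w_t$ is a critical point of $\n^2$ alone, where the gradients of $\n^2$ and $H$ need not be aligned. Once the critical-point estimate is in place, the paper runs essentially the bootstrap you describe: first decay $\exp(-(1+2\delta_0)t)$ for $\alpha_t$ from the Riccati structure, then an upgrade to $\exp(-2t)$ for $\n^2$, then the $H^2-4$ bound.
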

\begin{proof} We assume that the bounds in \eqref{strokes} hold.
    Let $\alpha_t:=\n^2H^{-2}$. From Lemma \ref{evol1} 
    $$\frac{d H^{-2}}{dt} = \frac{\Delta H^{-2}}{H^2}+2\left(\frac{\n^2}{H^2}+\frac{H^2+2Rc(\nu,\nu)}{2H^2}\right)H^{-2}
    -\frac{2|\nabla H|^2}{|H|^6}$$
    and thus
    \begin{equation}\label{montijo}
    \frac{d\alpha_t}{dt}\leq\frac{\Delta \alpha_t}{H^2}+4\alpha_t^2-2\alpha_t+\left(\alpha_t+\sqrt{\alpha_t}
    +H^{-1}\sqrt{\alpha_t}\right)
    \frac{O(\exp(-3r))}{H^2}+Q,
    \end{equation}
    where
    $$Q:=4\frac{\langle\nabla H,\nabla \n^2\rangle}{H^5}-2\n^2\frac{|\nabla H|^2}{H^6}-2\frac{|\nabla \tf|^2}{H^4}-
    4\frac{\langle \nabla H \otimes\nabla H, \tf\rangle}{H^3}.$$
    We claim that, given $\varepsilon>0$, we can find a constant $C=C(\varepsilon_0,\varepsilon,\overline r_0-\underline r_0,m)$ so that
    \begin{equation*}
    \frac{d\alpha_t}{dt}\leq\frac{\Delta \alpha_t}{H^2}+4\alpha_t\left(\alpha_t-\frac{1}{2}+\varepsilon\right)
    +C\exp(-6\underline r_0-3t)+Q
    \end{equation*}
    and
    \begin{equation*}
    Q(p)\leq 4\frac{|\nabla H|^2}{H^4}\left((1+\varepsilon)\alpha_t(p)-\frac{1}{4}+\varepsilon\right)+C\exp(-6\underline r_0-3t),
    \end{equation*}
    whenever $p$ is a critical point of $\alpha_t$.

    The first inequality follows easily from Cauchy's inequalities combined with properties \eqref{rosa} and
    \eqref{strokes}.
     Denote by $\{v_1,v_2\}$ an eigenbasis for $\tf$ at $p$ and assume without loss of generality
     that $\tf(v_1,v_1)\geq 0$. Because $p$ is a critical point  of $\alpha_t$ the following identities hold at $p$
    $$\nabla\n^2=2\n^2 H^{-1}\nabla H$$
    and
    $$\n H^{-1} \nabla H=\sqrt 2\nabla \tf(v_1,v_1)=-\sqrt 2\nabla \tf(v_2,v_2).$$
    As a result, we obtain that
    $$|\nabla \tf|^2=\n^2\frac{|\nabla H|^2}{H^2}+2|\nabla \tf(v_1,v_2)|^2=\alpha_t^2|\nabla H|^2+2|\nabla \tf(v_1,v_2)|^2$$
    and
    \begin{multline*}
        2|\nabla \tf(v_1,v_2)|^2=2|\nabla_{v_2} A(v_1,v_1)+Rc(\nu,v_2)|^2+2|\nabla_{v_1} A(v_2,v_2)+Rc(\nu,v_1)|^2\\
        \leq 2|\nabla_{v_2} A(v_1,v_1)|^2+2|\nabla_{v_1} A(v_2,v_2)|^2\\
        +(\sqrt\alpha_t|\nabla H|+|\nabla H|)O(\exp(-3r))+O(\exp(-6r))\\
        = 2\left|\nabla_{v_2} \tf(v_1,v_1)+\frac{\langle \nabla H,v_2\rangle}{2}\right|^2+
        2\left|\nabla_{v_1} \tf(v_2,v_2)+\frac{\langle \nabla H,v_1\rangle}{2}\right|^2\\
        +(\sqrt\alpha_t|\nabla H|+|\nabla H|)O(\exp(-3r))+O(\exp(-6r))\\
        =2|\langle \nabla H,v_2\rangle|^2\left(\frac{\alpha_t}{\sqrt 2}
        +\frac{1}{2}\right)^2+
        2|\langle \nabla H,v_1\rangle|^2\left(\frac{\alpha_t}{\sqrt 2}-\frac{1}{2}\right)^2\\
        +(\sqrt\alpha_t|\nabla H|+|\nabla H|)O(\exp(-3r))+O(\exp(-6r))\\
        =\alpha_t^2|\nabla H|^2+|\nabla H|^2/2-2\frac{\langle \nabla H \otimes\nabla H, \tf\rangle}{H}\\
         +(\sqrt\alpha_t|\nabla H|+|\nabla H|)O(\exp(-3r))+O(\exp(-6r)).
    \end{multline*}
    Moreover, we also have that at the point $p$
    $$4\langle\nabla H,\nabla \n^2\rangle=8\alpha_t\frac{|\nabla H|^2}{H}$$
    and thus
    \begin{multline*}
        Q(p)=4\frac{|\nabla H|^2}{H^4}\left(\alpha_t(p)-\frac{1}{4}\right)\\+
    (\sqrt\alpha_t|\nabla H|+|\nabla H|)\frac{O(\exp(-3r))}{H^4}+\frac{O(\exp(-6r))}{H^4}.
    \end{multline*}
    The claim follows from Cauchy's inequalities combined with properties \eqref{rosa} and \eqref{strokes}.

	As a result, there is a constant $C_1=C_1(\varepsilon_0,\varepsilon,\overline r_0-\underline r_0,m)$ for which if we set
	$$\beta_t:=\alpha_t+C_1\exp(-6\underline r_0-3t),$$
	then
	 \begin{equation}\label{fogo1}
    \frac{d\beta_t}{dt}\leq\frac{\Delta \beta_t}{H^2}+4\alpha_t\left(\alpha_t-\frac{1}{2}+\varepsilon\right)+Q
    \end{equation}
    and
    \begin{equation*}
    Q(p)\leq 4\frac{|\nabla H|^2}{H^4}\left((1+\varepsilon)\alpha_t(p)-\frac{1}{4}+\varepsilon\right)
    \end{equation*}
    whenever $p$ is a critical point of $\beta_t$.
    
    Chose $\varepsilon<\delta_0/4$ so that
    $$(1+\varepsilon)\left(\frac{1}{4}-\frac{\delta_0}{4}\right)-\frac{1}{4}+\varepsilon \leq 0$$
    and chose $\underline r$ so that  $C_1\exp(-6\underline r) \leq \delta_0/4$. Thus $\beta_0\leq 1/4-3\delta_0/4$ and 
    $$\beta_t(x)\leq 1/4-\delta_0/2 \,\Longrightarrow \,\alpha_t(x)\leq 1/4-\delta_0/4.$$ Therefore we can apply the maximum principle to $\beta_t$ and conclude that  $$\alpha_t \leq \sup \alpha_0+C_1\exp(-6\underline r_0)$$ while the solution exists. This implies that
    $$\alpha_t-\frac{1}{2}+\varepsilon\leq-1/4-\delta_0/8$$ and so we obtain from  equation \eqref{fogo1}  that
    $$\alpha_t\leq(\sup \alpha_0+C\exp(-6\underline r_0))\exp(-(1+2\delta_0)t)$$ 
   for some $C=C(\varepsilon_0,\delta_0,\overline r_0-\underline r_0,m)$. As a result, 
   \begin{align*}
   \alpha_t^2 &\leq C(\sup \alpha^2_0+\exp(-12\underline r_0))\exp(-2t-4\delta_0 t),\\
   \sqrt{\alpha_t}\exp(3r)&\leq C(\sup \alpha_0+\exp(-6\underline r_0))\exp(-2t-\delta_0 t)
   \end{align*}
   for some $C=C(\varepsilon_0,\delta_0,\overline r_0-\underline r_0,m)$. Using this bounds in  equation \eqref{montijo} we obtain
   \begin{multline*}
   	\frac{\alpha_t}{dt}\leq\frac{\Delta \alpha_t}{H^2}-(2+C\exp(-3t/2))\alpha_t\\
	+C({\sup \alpha_0}+\exp(-6\underline r_0))\exp(-(2+\delta_0)t)+C \exp(-6\underline r_0-3t)+Q
   \end{multline*}
 for some $C=C(\varepsilon_0,\delta_0,\overline r_0-\underline r_0,m)$ and hence
    $$\n^2\leq C\left(\sup_{\Sigma_0} \n^2+\exp(-6\underline r_0)\right)\exp(-2t).$$
   
    The evolution equation for $H^2$ is given by (see \cite[Section 1]{Huisken2})
    \begin{equation*}
    \frac{d H^2}{dt} = \frac{\Delta H^2}{H^2}-\frac{6|\nabla H|^2}{|H|^2}-2\n^2-H^2-2Rc(\nu,\nu)
    \end{equation*}
    and thus, if we set $\phi_t:=\exp(t)(H^2-4)$, we obtain that
    \begin{equation*}
    \frac{d \phi_t}{dt} = \frac{\Delta \phi_t}{H^2}-\frac{3\langle \nabla\phi_t,\nabla H\rangle}{H^2}-2\n^2\exp(t)
    -\exp(t)(4+2Rc(\nu,\nu))
    \end{equation*}
    From the upper bound  derived for $\n$ and the bounds given in \eqref{rosa} and
    \eqref{strokes}
     we have that
    \begin{multline*}
    \frac{d \phi_t}{dt} \geq \frac{\Delta \phi_t}{H^2}-\frac{3\langle \nabla\phi_t,\nabla H\rangle}{H^2}\\
    -C\left(\sup_{\Sigma_0} \n^2+\exp(-6\underline r_0)\right)\exp(-t)-C\exp(-t/2-3\underline r_0)
    \end{multline*}
    where $C= C(\varepsilon_0,\delta_0,\overline r_0-\underline r_0,m) $. The maximum principle implies that 
    $$H^2\geq 4-C\left(\sup_{\Sigma_0}(|H^2-4|+\n^2)+\exp(-3\underline r_0)\right)\exp(-t).$$
    
    In order to show the existence of some $C=C(\varepsilon_0,\delta_0,\overline r_0-\underline r_0,m)$ for which
    $$H^2\leq 4+C\left(\sup_{\Sigma_0}(|H^2-4|+\n^2)+\exp(-3\underline r_0)\right)\exp(-t)$$
	it is enough to note that
$$\frac{d H^2}{dt} \leq \frac{\Delta H^2}{H^2}+4-H^2+C\exp(-3\underline r_0-3t/2).$$

\end{proof}

Fix some $\underline r$ for which Lemma \ref{hearts} holds. Note that in this case we
have a uniform bound for $|A|^2$ and so standard estimates can be used to show that the solution
$(\Sigma_t)_{t\geq 0}$ exists for all time. Nonetheless, we need shaper  estimates on all the derivatives of $A$ and this will occupy most of the rest of the proof. What we have done so far proves Theorem \ref{imcf} (i). The next lemma will be useful in proving Theorem \ref{imcf} (iii).

\begin{lemm}\label{so}
There is a constant $C=C(Q_0,\varepsilon_0,\delta_0,\overline r_0-\underline r_0,m)$ such that
$$1-\langle\partial_r,\nu \rangle\leq\ C\left(\sup_{\Sigma_0}(1-\langle\partial_r,\nu\rangle)+\exp(-3\underline r_0)\right)\exp(-t)$$
for all $t$ and thus
$$\ar |\nabla r|^2\leq C\exp(-t)$$
for some other constant $C=C(Q_0, Q_1,\varepsilon_0,\delta_0,\overline r_0-\underline r_0,m).$
\end{lemm}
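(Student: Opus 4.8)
The plan is to derive a scalar parabolic evolution equation for $\eta:=1-\langle\partial_r,\nu\rangle\ge 0$ along the flow and then run the maximum principle together with a continuity argument, much as in the proof of Lemma \ref{hearts}. First I would write down this equation. Since the ambient is the exact warped product $g_m=dr^2+\phi(r)^2g_0$, setting $a:=\phi'(r)/\phi(r)$ one has the exact identities $\mathrm{Hess}^{M}r=a\,(g_m-dr\otimes dr)$, $\Delta r=a\,(2-\dr^2)-H\langle\partial_r,\nu\rangle$ on $\Sigma_t$, and $Rc=-2g_m+O(\exp(-3r))$ as a tensor. The most economical route is to reuse Lemma \ref{evol1} a): substituting $\langle X,\nu\rangle=\phi(r)(1-\eta)$, $\dr^2=2\eta-\eta^2$, $\frac{dr}{dt}=\langle\partial_r,\nu\rangle/H$ and the above formula for $\Delta r$ into $\eta=1-\beta_t e^{t/2}/\phi(r)$ and differentiating, all the $\langle\nabla r,\nabla H\rangle/H^2$ contributions cancel and one is left with
\begin{equation}\label{eta-evol}
\frac{d\eta}{dt}=\frac{\Delta\eta}{H^2}+c\,\eta+e,
\end{equation}
where, pointwise on $\Sigma_t$, $c=-1+\frac{3(H-2a)^2}{2H^2}+\mathcal E_c$ and $e\le\frac{(H-2a)^2}{2H^2}$, the remainder $\mathcal E_c$ collecting terms of size $O(\eta)+O(\n)+O(\exp(-2r))$ (quadratic-and-higher terms in $\eta$ with bounded coefficients, the term coming from $\tf(\nabla r,\nabla r)$, and lower-order warped-product and mass corrections), while the favourable term $-\n^2/H^2\le0$ in the inhomogeneity has been discarded. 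The sanity check is that \eqref{eta-evol} reads $0=0$ on coordinate spheres, where $\eta\equiv0$, $H=2a$, $\n=0$; the two points that make the scheme work are that the $\eta$-independent part of the right-hand side is bounded by $(H-2a)^2/(2H^2)$ and that the coefficient of $\eta$ is $-1$ up to the manifestly small $3(H-2a)^2/(2H^2)+\mathcal E_c$.

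Next I would exploit smallness of these coefficients. By Theorem \ref{imcf} (i), $|H-2|\le C|\Sigma_0|^{-1}\exp(-t)$, and $|2a-2|\le C\exp(-2r)\le C|\Sigma_0|^{-1}\exp(-t)$ by \eqref{rosa}, so $(H-2a)^2\le C(Q_0)|\Sigma_0|^{-2}\exp(-2t)$; by Lemma \ref{hearts} and $(H)$ one has $\n\le C|\Sigma_0|^{-1}\exp(-t)$ as well. Hence $e\le C(Q_0)|\Sigma_0|^{-2}\exp(-2t)$, and $c\le -1+\delta(t)$ where, under the a priori assumption $\eta_{\max}(s)\le2(\sup_{\Sigma_0}\eta+\exp(-3\underline r_0))\exp(-s)$ on $[0,T]$, the quantity $\int_0^T|\delta|\,ds$ is as small as we like once $\underline r_0$ is large (each ingredient of $\delta$ — $(H-2a)^2/H^2$, $\n$, $\exp(-2r)$, $\eta$ — is bounded by $C|\Sigma_0|^{-1}\exp(-s)$ or better, so has small integral). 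Applying the maximum principle to $\eta_{\max}(t):=\max_{\Sigma_t}\eta$ — at a spatial maximum $\Delta\eta\le0$ — and integrating $\frac{d^+}{dt}\eta_{\max}\le(-1+\delta(t))\eta_{\max}+e(t)$ by Gronwall gives
$$\eta_{\max}(t)\le e^{-t+\int_0^t\delta}\Big(\eta_{\max}(0)+\int_0^t e^{s-\int_0^s\delta}e(s)\,ds\Big)\le 2e^{-t}\big(\eta_{\max}(0)+C(Q_0)|\Sigma_0|^{-2}\big).$$
Choosing the a priori constant $2$ above large enough, this strictly improves the assumed bound and therefore holds for all $t$; since $\eta_{\max}(0)=\sup_{\Sigma_0}(1-\langle\partial_r,\nu\rangle)$ and $|\Sigma_0|^{-2}\le C\exp(-4\underline r_0)\le\exp(-3\underline r_0)$ for $\underline r_0$ large, this is the first assertion, with $C=C(Q_0,\varepsilon_0,\delta_0,\overline r_0-\underline r_0,m)$ (the other $Q_j$ enter only through how large $\underline r_0$ must be taken, not through this constant).

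Finally, from $\dr^2=1-\langle\partial_r,\nu\rangle^2\le2(1-\langle\partial_r,\nu\rangle)$ and $|\nabla r|=\dr$ the first estimate gives $|\nabla r|^2\le2C(\sup_{\Sigma_0}(1-\langle\partial_r,\nu\rangle)+\exp(-3\underline r_0))\exp(-t)$; multiplying by $|\Sigma_0|$ and using $|\Sigma_0|\sup_{\Sigma_0}(1-\langle\partial_r,\nu\rangle)\le Q_1$ (from $(H)$) together with $|\Sigma_0|\exp(-3\underline r_0)\le C\exp(2(\overline r_0-\underline r_0))$ gives $\ar|\nabla r|^2\le C\exp(-t)$, now with $C$ depending also on $Q_1$. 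I expect the only real obstacle to be the first step — establishing \eqref{eta-evol} with the precise coefficients $-1+3(H-2a)^2/(2H^2)$ and $(H-2a)^2/(2H^2)$: this is a direct but lengthy computation (either the $\beta_t$ manipulation above, or an expansion of $\Delta\langle\partial_r,\nu\rangle$ via the Laplacian-of-a-restriction formula and the Codazzi equations), whose one subtlety is, as throughout this section, to verify that no error term secretly depends on $\underline r_0$.
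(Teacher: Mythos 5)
Your proposal is correct and follows essentially the same route as the paper: derive a parabolic evolution inequality for $\eta=1-\langle\partial_r,\nu\rangle$ and close it with the maximum principle, then convert to the $\ar|\nabla r|^2$ bound using hypothesis $(H)$ and $\dr^2\leq 2\eta$. The only difference in mechanics is at the first step: the paper computes $\Delta\langle\partial_r,\nu\rangle$ directly via the divergence identities for $-\frac{\phi'}{\phi}\langle\partial_r,\nu\rangle\partial_r^{\top}$ and $A(\cdot,\partial_r^{\top})$, ending with the form
$$H^2Q=\frac{(H-2\phi'/\phi)^2}{2}+\frac{\alpha_t}{4}(\alpha_t^2-2\alpha_t-4)+\n^2\langle\partial_r,\nu\rangle+\alpha_t\Lambda+O(\exp(-3r)),\quad \alpha_t=-\eta,$$
whereas you propose to recover the same equation by differentiating $\eta=1-\beta_t e^{t/2}/\phi(r)$ through Lemma \ref{evol1} a); both give the same coefficients up to the bookkeeping of $\mathcal E_c$. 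One substantive point in your favor: your a priori/continuity assumption on $\eta_{\max}$ is exactly what is needed to control the cubic term $\frac{\alpha_t^2(\alpha_t-2)}{4}$ coming out of $\frac{\alpha_t}{4}(\alpha_t^2-2\alpha_t-4)$ — this term is only $O(\eta)$ with a coefficient bounded by $3/4$ absolutely, so without using that $\eta$ stays small (which your bootstrap supplies, seeded by the $(H)$ condition $\eta_0\leq Q_1|\Sigma_0|^{-1}$) the crude coefficient of $\eta$ in the linearized inequality would be $-1/4$ rather than $-1$ and the decay rate $\exp(-t)$ would not follow; the paper's displayed chain $Q\geq-\alpha_t(1-\Lambda)+O(\exp(-3r))$ tacitly uses this smallness, since its definition of $\Lambda$ does not absorb powers of $|\alpha_t|$. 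The rest — the bound $e\leq (H-2a)^2/(2H^2)$ (together with the $O(\exp(-3r))$ mass correction, which you should keep in $e$; it is not dominated by $(H-2a)^2/H^2$), the Gronwall step, and the passage from the pointwise estimate to $\ar|\nabla r|^2\leq C\exp(-t)$ using $|\Sigma_0|\eta_0\leq Q_1$ and $|\Sigma_0|\exp(-3\underline r_0)\leq C$ — matches the paper.
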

\begin{proof}
We denote by $\Lambda$ any geometric quantity defined on $\Sigma_t$ for which we can find a constant $C=C(\varepsilon_0,\delta_0,\overline r_0-\underline r_0,m)$ such that
$$|\Lambda |\leq C(|H-2|+\n+\exp(-2r))$$
For every vector $Y$ we have that
    $$D_Y \partial_r={\phi'(r)}/{\phi(r)}\left(Y-\langle Y,\partial_r\rangle\partial_r\right).$$
Therefore
\begin{align*}
\frac{d\langle\partial_r,\nu \rangle}{dt}&=\left\langle
D_{\partial_t}\partial_r,\nu\right\rangle+\left\langle\partial_r, D_{\partial_t}\nu\right\rangle\\
&=\frac{\phi'}{\phi}\frac{1}{H}-\frac{\phi'}{\phi}\frac{\langle\partial_r,\nu
\rangle^2}{H}+\frac{\langle\partial_r,\nabla H \rangle}{H^2}.
\end{align*}
For every tangent vectors $Z$ and $W$ we have
$$\langle\nabla \langle\partial_r,\nu \rangle, Z\rangle=-\phi'/\phi\langle\partial_r,\nu \rangle\langle\partial_r,Z \rangle+
A(Z,\partial_r^{\top})$$ and
$$\langle \nabla_Z\partial_r^{\top}, W\rangle=\phi'/\phi\langle Z,W\rangle-
\phi'/\phi \langle Z,\partial_r\rangle\langle W,\partial_r\rangle-\langle\partial_r,\nu \rangle A(Z,W),$$ where
$\partial_r^{\top}$ denotes the tangential projection of $\partial_r$. These identities combined with Lemma
\ref{hearts} and with
$$\frac{\phi'}{\phi}=1+O(\exp(-2r))$$
imply that
\begin{multline*}
\d \left(-\frac{\phi'}{\phi}\langle\partial_r,\nu
\rangle\partial_r^{\top}\right)=2\left(\frac{\phi'}{\phi}\right)^2\langle\partial_r,\nu \rangle(\dr^2-1)+
\frac{\phi'}{\phi}\langle\partial_r,\nu \rangle^2H\\- \left(\frac{\phi'}{\phi}\right)'\dr^2\langle\partial_r,\nu
\rangle-\frac{\phi'}{\phi}\langle\partial_r,\nu \rangle A(\partial_r^{\top},\partial_r^{\top})\\
 =-2\left(\frac{\phi'}{\phi}\right)^2\langle\partial_r,\nu \rangle+\langle\partial_r,\nu \rangle\dr^2\\ +
\frac{\phi'}{\phi}\langle\partial_r,\nu \rangle^2H+\dr^2\Lambda
\end{multline*}
and
\begin{multline*}
    \d(A(\cdot,\partial_r^{\top}))=\langle\nabla
    H,\partial_r\rangle+Rc(\nu, \partial_r^{\top})+\frac{\phi'}{\phi}H-\frac{\phi'}{\phi}A(\partial_r^{\top},\partial_r^{\top})
    -|A|^2\langle\partial_r,\nu \rangle\\
    =\langle\nabla
    H,\partial_r\rangle+\frac{\phi'}{\phi}H-\dr^2-\frac{H^2}{2}\langle\partial_r,\nu
    \rangle-\n^2\langle\partial_r,\nu \rangle\\
    +\dr^2\Lambda+O(\exp(-3r)).
\end{multline*}
As a result we get
$$\frac{d\langle\partial_r,\nu \rangle}{dt}=\frac{\Delta\langle\partial_r,\nu \rangle}{H^2}+Q,$$
where
\begin{multline*}
H^2Q=2\left(\frac{\phi'}{\phi}\right)^2\langle\partial_r,\nu \rangle-\langle\partial_r,\nu \rangle\dr^2-
2\frac{\phi'}{\phi}\langle\partial_r,\nu \rangle^2H +\dr^2+\frac{H^2}{2}\langle\partial_r,\nu
\rangle\\+\n^2\langle\partial_r,\nu \rangle+\dr^2\Lambda+O(\exp(-3r)).
\end{multline*}
Setting $\alpha_t:=\langle\partial_r,\nu \rangle-1,$ we obtain from \eqref{strokes} and Lemma \ref{hearts} that
\begin{multline*}
Q=2H^{-2}\left(\frac{H^2}{4}-H\frac{\phi'}{\phi}+\left(\frac{\phi'}{\phi}\right)^2\right)+\frac{\alpha_t}{4}(\alpha_t^2-2\alpha_t-4)\\
+\n^2\langle\partial_r,\nu \rangle+\alpha_t \Lambda+O(\exp(-3r))\\
\geq \frac{\alpha_t}{4}(\alpha_t^2-2\alpha_t-4) +\alpha_t  \Lambda+O(\exp(-3r))\\
\geq -\alpha_t(1-\Lambda)+ O(\exp(-3r)),
\end{multline*}
where the last inequality follows from  $0\geq \alpha_t\geq-1$.  There is $$C=C(Q_0,\varepsilon_0,\delta_0,\overline r_0-\underline r_0,m)$$ for which
$$|\Lambda|\leq C\exp(-t)$$
 and hence
$$\frac{d\alpha_t}{dt}\geq\frac{\Delta \alpha_t}{H^2}-(1+C\exp(-t))\alpha_t+C\exp(-3t/2-3\underline r_0)$$
for some other $C=C(Q_0,\varepsilon_0,\delta_0,\overline r_0-\underline r_0,m).$ This equation implies the desired result.
\end{proof}


For the rest of the proof,  $C$ will denote any constant with dependence 
$$C=C((Q_j)_{j\in\N},\varepsilon_0,\delta_0,\overline r_0-\underline r_0,m).$$

Set $\hat r_t$ to be such that $|\Sigma_t|=4\pi\sinh^2 \hat r_t$ and we remark that $\hat r_t-t/2$ is uniformly
bounded. An immediate consequence of the previous lemma is that $\Sigma_t$ can be written as the graph of a
function $f_t$ over the coordinate sphere $\{|x|=\hat r_t\}$ with 
$$|f_t|\leq C\quad\mbox{and}\quad\ar|\nabla f_t|^2\leq C\exp(-t)$$
for some constant $C$. Furthermore, Lemma \ref{hearts} and Proposition \ref{nelsa}  imply the existence of some constant $C$ for which 
$$\ar^2|\nabla ^2 f_t|^2\leq C\exp(-2t).$$
The next lemma is an adaptation of what was done in \cite[Section 6]{gerhardt}.

 Given two tensors $P$ and $S$ we denote by $S\ast T$ any linear combination of tensors formed by contracting
    over $S$ and $T$.

\begin{lemm}\label{national} There is $\underline r=\underline r((Q_j)_{j\in\N},\varepsilon_0,\delta_0,\overline r_0-\underline r_0,m)$ so that if $\underline r_0\geq \underline r$ the following property holds.

For every $n\geq 0$ there is a constant $C$ such that
$$\ar^n| \nabla^n f_t|^2\leq C\exp(-nt)$$
for all $t$. 
Equivalently, for all $n\geq 1$ there is a constant $C$ for which
$$\ar^{n+2}|\nabla^n A|^2\leq C\exp(-(n+2)t).$$
\end{lemm}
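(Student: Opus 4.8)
The plan is to prove Lemma \ref{national} by induction on $n$, using the equivalence between pointwise bounds on $\nabla^n f_t$ (derivatives on $S^2$ of the graph function) and weighted bounds on $\nabla^n A$ (covariant derivatives of the second fundamental form on $\Sigma_t$), which follows from Proposition \ref{nelsa} c) together with the already-established estimates $|f_t|\le C$ and $\ar|\nabla f_t|^2\le C\exp(-t)$. The base cases $n=0,1,2$ are already in hand: $n=0$ and $n=1$ come from Lemma \ref{so} and the discussion immediately following it, and $n=2$ from combining Lemma \ref{hearts} with Proposition \ref{nelsa}. The inductive step is where the real work lies.

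For the inductive step, assume the bound $\ar^{k+2}|\nabla^k A|^2\le C\exp(-(k+2)t)$ for all $k\le n-1$ (equivalently $\ar^k|\nabla^k f_t|^2\le C\exp(-kt)$ for $k\le n-1$), and derive it for $k=n$. First I would compute the evolution equation satisfied by $\nabla^n A$ under inverse mean curvature flow. Schematically, commuting $\nabla^n$ past the parabolic operator $\partial_t-H^{-2}\Delta$ in the evolution equation for $A$ produces
$$\frac{d}{dt}\nabla^n A = \frac{\Delta \nabla^n A}{H^2} + \sum \frac{1}{H^{2+j}}\nabla^{a_1}A\ast\cdots\ast\nabla^{a_p}A + (\text{curvature terms involving } \bar R, \bar D\bar R),$$
where in each nonlinear term the exponents satisfy $a_1+\cdots+a_p = n+$ (lower order) and at most one factor has the top order $n$, the rest being of order $\le n-1$ and hence controlled by the inductive hypothesis. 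The ambient curvature contributions are $O(\exp(-3r))$ together with their derivatives, which by \eqref{rosa} are $\le C\exp(-3\underline r_0 - 3t/2)$. One then forms the quantity $\phi_t := \ar^{n+2}\exp((n+2)t)|\nabla^n A|^2$ (or rather works with $|\nabla^n A|^2$ directly and tracks the weights), shows it satisfies a differential inequality of the form $\frac{d\phi_t}{dt}\le \frac{\Delta\phi_t}{H^2} - c\,\phi_t + (\text{controlled lower-order and ambient terms})$ for some $c>0$ coming from the $-t$ scaling combined with the $-2\n^2 -$ etc.\ structure — the key algebraic point being that the natural scaling of $|\nabla^n A|^2$ under the flow (where $\ar\sim\exp(t)$) supplies a favorable sign — and then applies the maximum principle, exactly as in the proofs of Lemma \ref{hearts} and Lemma \ref{so}. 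Choosing $\underline r$ large absorbs the ambient error terms, and the bound on $\sup_{\Sigma_0}|\nabla^n A|$ from hypothesis $(H)$ supplies the initial data.

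The equivalence with the $f_t$ statement is then recovered from Proposition \ref{nelsa} c): once $\ar^k|\nabla^k A|^2$ is controlled for all $k\le n$, the estimate $\sup_{S^2}|\nabla^n_0 f_t|^2 \le C + C\ar^{n+2}|\nabla^n A|^2$ gives $\sup_{S^2}|\nabla^n_0 f_t|^2\le C$, and then a rescaling/bootstrap using that $\ar\sim\exp(t)$ upgrades this to the decay $\ar^n|\nabla^n f_t|^2\le C\exp(-nt)$; conversely the reverse inequality in Proposition \ref{nelsa} c) closes the loop. One subtlety is that Proposition \ref{nelsa} c) is stated for graphs over fixed coordinate spheres with $C^{n-1}$ control on $f$, so one must feed in the already-proven lower-order bounds to make its constants uniform — this is why the induction must carry the $f_t$ statement and the $A$ statement in tandem.

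The main obstacle I expect is bookkeeping the nonlinear terms in the evolution equation for $\nabla^n A$: one must verify that every term is either genuinely lower order (handled by induction), or of top order but linear in $\nabla^n A$ with a coefficient of the right sign/size, or an ambient-curvature term that is $O(\exp(-3r))$ together with enough derivatives. Extracting a clean differential inequality with a strictly negative linear coefficient — so that the maximum principle yields exponential decay rather than mere boundedness, and with all constants independent of $\underline r_0$ (depending only on $\overline r_0-\underline r_0$, the $Q_j$, $\varepsilon_0$, $\delta_0$, $m$) — requires the same careful tracking of powers of $\ar$ and of $\exp(-t)$ that appears in Lemma \ref{hearts}, now iterated through the Simons-type identity for $\nabla^n A$. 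This is the ``careful bookkeeping'' alluded to in the introduction, and it is essentially an adaptation of \cite[Section 6]{gerhardt} to the Anti--de Sitter--Schwarzschild background with the non-area-dependent constants.
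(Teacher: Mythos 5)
Your overall induction scheme — commute $\nabla^n$ past the parabolic operator, write a Simons-type evolution for $\nabla^n A$, track weights of $\ar$ and $\exp(-t)$, and close by the maximum principle — matches the skeleton of the paper's argument, and your reading of how the base cases $n=0,1,2$ are in place and how Proposition~\ref{nelsa}~c) converts between $f_t$ and $A$ estimates is correct. But there is a genuine gap at the first nontrivial step, $n=1$, which you dismiss by saying the maximum principle applies ``exactly as in the proofs of Lemma~\ref{hearts} and Lemma~\ref{so}.'' It does not, because the evolution inequality for $\alpha_t:=\ar^3|\nabla\tf|^2+\exp(-3t)$ (or any natural weighted version of $|\nabla A|^2$) produced by the Simons computation contains a quadratic self-interaction term, of the schematic form $C\alpha_t^2$, coming from $\nabla A\ast\nabla A\ast L_0$ paired against $\nabla\tf$. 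Unlike Lemma~\ref{hearts}, where the hypothesis $\n^2\le(1/4-\delta_0)H^2$ in $(H)$ supplies the smallness that makes the quadratic term benign, here the initial bound $\ar^3|\nabla A_0|^2\le Q_3$ from $(H)$ is only bounded, not small, and $Q_3$ is an arbitrary fixed constant. A direct maximum principle on $\alpha_t$ therefore gives at best a differential inequality with an effective coefficient $-3+C\sup\alpha_t$, which has no sign unless $\sup\alpha_t$ is already small, which is exactly what you are trying to prove.

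The paper's way around this is a log-trick with a carefully chosen auxiliary term: it introduces $\psi_t:=\tfrac12\log\alpha_t^2+K\ar^3\n^2$, where the second summand has a separately favorable Simons-type evolution whose leading negative term is $-\tfrac14\ar^3|\nabla\tf|^2$ (after taking $\underline r_0$ large so that $\bar C\ar^{-1}\le 1/4$). Taking the logarithm converts the quadratic term $C\alpha_t^2$ in the $\alpha_t$-equation into a linear term $C\alpha_t\le C\ar^3|\nabla\tf|^2+C$, and for $K$ large this is overwhelmed by the $-K/4\cdot\ar^3|\nabla\tf|^2$ contributed by the auxiliary term; the gradient term $|\nabla\log\alpha_t|^2$ that the logarithm produces is then controlled at an interior maximum of $\psi_t$ using $\nabla\psi_t=0$ together with the already-known decay $\ar^2\n^2\le C\exp(-2t)$. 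Without this device (or something equivalent), your plan cannot get off the ground at $n=1$. Secondarily, the paper deliberately works with the trace-free part $\tf$ rather than $A$ and proves a Codazzi-based comparison (Lemma~\ref{nba}) to transfer the estimates, precisely because $\tf$'s evolution has the clean $-\tf$ damping term and no $\nabla^2 H$ on the right; this is more than bookkeeping and you should build it in rather than work with $A$ directly. Once $n=1$ is established, your description of the inductive step for $n>1$ — with at most one top-order factor in each nonlinear term — is accurate and the straightforward maximum principle does suffice there.
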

\begin{proof}

We start by showing that it is enough to bound $\nabla^n \tf$.
\begin{lemm}\label{nba}
There exists a constant $C$ for which
$$\ar^3|\nabla A|\leq C\ar^3|\nabla \tf|+C\exp(-3t/2)$$
and
$$\ar^4 |\nabla^2 A|\leq C \ar^4 |\nabla^2 \tf|+C\exp(-2t).$$
Moreover, if we can find a constant $E$ for which $$\ar^{k+2}|\nabla^k \tf|^2\leq E\exp(-(k+2)t)\quad\mbox{for all
}k=1,\ldots n-1,$$ then
$$
\ar^{n+3}|\nabla^{n+1} A|^2\leq C_1\ar^{n+3}|\nabla^{n+1} \tf|^2+C_1\exp(-(n+3)t).
$$
for some constant $C_1=C_1(E,(Q_j)_{j\in\N},\varepsilon_0,\delta_0,\overline r_0-\underline r_0,m)$.
\end{lemm}
\begin{proof}
    On each $\Sigma_t$ consider the $1$-form
   $$B(X)=Rc(X,\nu).$$
   First we estimate the derivatives of $B$. In local coordinates $(x_1,x_2)$, $B$ can be written as
   $$B_j=F_j(r,\nabla r), \quad j=1,2,$$
   where $F_{j}(r,q_1,q_2)$ is defined on $\R^3$ and 
   $$|D^k F_j|\leq S_k\exp(-3r) \quad j=1,2$$
   for some constant $S_k$, provided $(q_1,q_2)$ lie on a fixed compact set.

   We denote by $P$ any tensor on $\Sigma_t$ for which $|P|=O(\exp(-3r))$ and by $Q$ any tensor for which $|Q|=O(\exp(-3r))$ and $$\nabla Q=\nabla r \ast P+\nabla^2 r\ast P.$$
     Using this notation we have
   $$\nabla B=\nabla r\ast Q+\nabla^2 r\ast Q$$
   and  so we can estimate 
    $$\ar^3|B|^2\leq C\exp(-3t),\quad\mbox{and}\quad \ar^4|\nabla B|^2\leq C\exp(-4t)$$
    for some constant $C$.

    Let $\{v_1,v_2\}$ be an orthonormal basis for $\Sigma_t$. We know that for every integer $p$
    $$\nabla^p A(v_1,v_2)=\nabla^p \tf(v_1,v_2)$$
    and $$ \nabla^p A(v_1,v_1)-\nabla^p A(v_2,v_2)= \nabla^p \tf(v_1,v_1)-\nabla^p \tf(v_2,v_2).$$
    Moreover, Codazzi equations imply that for $i\neq j$
    $$\nabla^{p}\nabla_{v_i} A(v_j,v_j)=\nabla^{p}\nabla_{v_j} A(v_1,v_2)-\nabla^{p}B_{i}$$
    and thus
    $$|\nabla^{m+1} A|\leq C|\nabla^{m+1} \tf|+C|\nabla^{m}B|$$
    for every integer $m$. This implies the desired result when $n=0,1$.

    To prove the general result we proceed by induction.
    The inductive hypothesis implies that
    $$\ar^k|\nabla ^k r|^2=\ar^k|\nabla^k f_t|^2\leq C_1\exp(-kt)\quad\mbox{for all } k=1,\ldots,n+1$$
    for some $C_1=C_1(E,(Q_j)_{j\in\N},\varepsilon_0,\delta_0,\overline r_0-\underline r_0,m)$ and thus, using  the expression derived for $\nabla B$, we obtain  
    $$\ar^{k+3}|\nabla^k B|^2 \leq C_1 \exp(-(k+3)t)\quad\mbox{for all } k=1,\ldots,n$$
    for some $C_1=C_1(E,(Q_j)_{j\in\N},\varepsilon_0,\delta_0,\overline r_0-\underline r_0,m)$. 
    Hence, the desired result follows.
\end{proof}

       In what follows $L_1$  will denote any tensor that satisfies the following properties. There exists a
    constant $C$ for which
    $$\ar |L_1|\leq C\exp(-t), \quad\ar^3|\nabla L_1|^2\leq C\ar^3|\nabla A|^2+C\exp(-3t),$$
     and if there is a constant $E$ such that 
      $$\ar^{k+2}|\nabla^k A|^2\leq E\exp(-(k+2)t)\quad\mbox{for all }k=1,\ldots
n-1,$$ then $$\ar^{n+2}|\nabla^{n}L_1|^2\leq C_1\ar^{n+2}|\nabla^n A|^2+C_1\exp(-(n+2)t)$$ for some  constant 
$C_1=C_1(E,(Q_j)_{j\in\N},\varepsilon_0,\delta_0,\overline r_0-\underline r_0,m).$ Likewise, $L_0$
will denote any tensor with the same properties of $L_1$ except that we just require $|L_0|$ to be uniformly
bounded. 

We can see from Lemma \ref{evol1} that the evolution equation for $\tf$ can be written as
    $$\frac{d\tf}{dt}\thickapprox\frac{\Delta \tf}{H^2}-\tf+L_1\ast\tf+M+\nabla A\ast\nabla A\ast L_0,$$ where the tensor $M$ stands for the term 
    $$\left(\frac{1}{H}+\frac{1}{H^2}\right)O(\exp(-3r))$$
   that appears on Lemma \ref{evol1} c). The relevant property of $M$ is that
     $$\ar^2|M|^2\leq C\exp(-3t)$$
     and
     $$\ar^3|\nabla M|^2\leq C\exp(-3t)\ar^3|\nabla A|^2+C\exp(-4t)$$ for some constant $C$. If there is a constant $E$ such that for all $t$
      $$\ar^{k+2}|\nabla^k A|^2\leq E\exp(-(k+2)t)\quad\mbox{for all }k=1,\ldots
n-1,$$ then $$\ar^{n+2}|\nabla^{n}M|^2\leq C_1\exp(-3t)\ar^{n+2}|\nabla^n A|^2+C_1\exp(-(n+3)t)$$
for some other constant $C_1=C_1(E,(Q_j)_{j\in\N},\varepsilon_0,\delta_0,\overline r_0-\underline r_0,m)$.  This  follows from the fact that in local coordinates $(x_1,x_2)$
$$M=\left(\frac{1}{H}+\frac{1}{H^2}\right)F(r,\nabla r),$$
where $F(r,q_1,q_2)$ is a matrix-valued function defined on $\R^3$ for which there is a constant $S_k$ such that, provided $(q_1,q_2)$ lie on a fixed compact set,
 $$|D^k F|\leq S_k\exp(-3r).$$

If $K$ denotes the curvature tensor of $\Sigma_t$, then for any tensor $T$ we know that
$$\Delta \nabla T=\nabla \Delta T+K\ast \nabla T+\nabla K\ast T$$
and
\begin{align*}
	\frac{d \nabla T}{dt}&=\nabla \frac{dT}{dt}-\frac{\nabla T}{2}+\nabla T\ast\tf+T\ast \nabla A\\
	&=\nabla \frac{dT}{dt}-\frac{\nabla T}{2}+\nabla T\ast L_1+T\ast \nabla L_0.
\end{align*}
The last identity comes from the fact that, using normal coordinates,
\begin{multline*}
	\frac{d \nabla T}{dt}(\partial_1,\cdots,\partial_{n+1})=\nabla \frac{dT}{dt}(\partial_1,\cdots,\partial_{n+1})-
\nabla T(\partial_1,\cdots,\partial_{n},D_{\partial_t}\partial_{n+1})\\
+(T\ast \nabla A)(\partial_1,\cdots,\partial_{n+1}).
\end{multline*}
Therefore,
\begin{multline*}
	\frac{d\nabla T}{dt}=\frac{\Delta \nabla T}{H^2}-\frac{\nabla T}{2}+\nabla\left(\frac{dT}{dt}-\frac{\Delta T}{H^2}\right)
+T\ast \nabla L_1\\
+\nabla T\ast L_1+\nabla^2 T\ast \nabla L_1+T\ast \nabla L_0.
\end{multline*} 

Proceeding inductively, it can be checked that
\begin{multline*}
\frac{d\nabla^n \tf}{dt}\thickapprox\frac{\Delta\nabla^n \tf}{H^2}-\left(\frac{n}{2}+1\right)\nabla^{n}\tf+\sum_{j=0}^n\nabla^j\tf\ast
\nabla^{n-j}L_1\\
+\sum_{j=0}^{n-1}\nabla^{j+2}\tf\ast \nabla^{n-j}L_1+\nabla^n M+\sum_{j=0}^{n-1}\nabla^j\tf\ast
\nabla^{n-j}L_0\\
+\sum_{j,k,l\geq 0, j+k+l=n}\nabla^{j+1} A\ast \nabla^{k+1} A\ast \nabla^l L_0
\end{multline*}
and thus we can find a constant $C$ for which
\begin{multline*}
\frac{d|\nabla^n \tf|^2}{dt}\leq\frac{\Delta|\nabla^n
\tf|^2}{H^2}-2\frac{|\nabla^{n+1}\tf|^2}{H^2}-(n+2)|\nabla^n
\tf|^2\\
+C\sum_{j=0}^n|\nabla^j\tf||
\nabla^{n-j}L_1||\nabla^n \tf|+C\sum_{j=0}^{n-1}|\nabla^{j+2}\tf||\nabla^{n-j}L_1||\nabla^n \tf|\\
+C|\nabla^n M||\nabla^n \tf|+C \sum_{j,k,l\geq 0, j+k+l=n}|\nabla^{j+1} A||\nabla^{k+1} A||\nabla^l L_0||\nabla^n
\tf|\\
+C\sum_{j=0}^{n-1}|\nabla^j\tf||\nabla^{n-j}L_0||\nabla^n \tf|.
\end{multline*}

We now show the desired bound when $n=1$.  Recall that for some constant $C$ we have (see Lemma \ref{hearts} and Lemma \ref{nba})
$$|\nabla L_0|+|\nabla L_1|+|\nabla A|\leq C(|\nabla \tf|+\exp(-3t/2)\ar^{-3/2}),$$
 $$|\nabla^2 A |\leq C(|\nabla^2 \tf|+\exp(-2t)\ar^{-2}),\quad\mbox{and}\quad\ar^2\n^2\leq C\exp(-2t).$$
In this case, we can find  $\varepsilon>0$ such that 
\begin{multline*}
\frac{d|\nabla \tf|^2}{dt}\leq\frac{\Delta|\nabla \tf|^2}{H^2}-(3-C\exp(-\varepsilon t))|\nabla \tf|^2+C|\nabla
\tf|^4\\+C\exp(-(3+\varepsilon)t)\ar^{-3}.
\end{multline*}
Hence, if we set $$\alpha_t:=\ar^{3}|\nabla
\tf|^2+\exp(-3t),$$ then
$$
\frac{d\alpha_t}{dt}\leq\frac{\Delta\alpha_t}{H^2}-(3-C\exp(-\varepsilon t))\alpha_t+C\alpha_t^2+C\exp(-(3+\varepsilon)t)
$$
for some other constant $C$. Moreover, from Lemma \ref{evol1} d) and Lemma \ref{hearts}, we can  find some positive constant
$$\bar C=\bar C((Q_j)_{j\in\N},\varepsilon_0,\delta_0,\overline r_0-\underline r_0,m).$$
so that
\begin{multline*}
\frac{d|\tf|^2}{dt}\leq\frac{\Delta|\tf|^2}{H^2}+(\bar C\ar^{-1}-1/2)|\nabla \tf|^2+(\bar C\ar^{-1}-1)|\tf|^2\\
+\bar C\exp(-3t)\ar^{-3}.
\end{multline*}
 Choose $\underline r$ so that 
$$\underline r_0\geq \underline r\,\Longrightarrow\, \bar C\ar^{-1}\leq 1/4$$ 

Set
$$\psi_t:=\frac{\log \alpha_t^2}{2}+K\ar^{3}\n^2,$$
where the constant $K$ will be chosen later. Note that
\begin{multline*}
\frac{d\psi_t}{dt}\leq \frac{\Delta \psi_t}{H^2}-(3-C\exp(-\varepsilon_t))+
(C-K/4)\ar^3|\nabla \tf|^2\\
+\frac{|\nabla \log \alpha_t^2|^2}{4}+C\exp(-3t)
\end{multline*} 
for some constant $C$. Choose $K$ such that $K>4C+4$. If $p$ is a maximum of $\psi_t$, then at $p$
$$\frac{|\nabla \log \alpha_t^2|^2}{4}=K^2|\nabla \n^2|^2\leq 2K^2|\nabla \tf|^2\n^2\leq
CK^2\ar^{-2}|\nabla \tf|^2.$$
We can now chose $\underline r$ so that for al $\underline r_0 \geq \underline r$ we have
$$-\ar^{3}|\nabla \tf|^2(p)+\frac{|\nabla \log \alpha_t^2|^2(p)}{4}\leq 0.$$ 

 The maximum principle implies that
$$\psi_t\leq -3t+C$$
for some constant $C$ and so
$$\ar^3|\nabla \tf|^2\leq C\exp(-3t)$$
for some other constant $C$.

 For $n>1$ we argue by induction. Thus, assume that
$$\ar^{k+2}|\nabla^k A|^2\leq C\exp(-(k+2)t)\quad\mbox{ for all }k=1,\ldots n-1$$ for some constant $C$.
Then, we can find another constant $C$ for which
\begin{align*}
|\nabla ^j L_0|^2+|\nabla^j L_1|^2&\leq C\ar^{-j-2}\exp(-(j+2)t)\quad\mbox{if } 1\leq j\leq n-1,\\
|\nabla ^n L_0|^2+|\nabla^n L_1|^2&\leq C|\nabla^n \tf|^2+C\ar^{-n-2}\exp(-(n+2)t),\\
|\nabla^{n+1}A|^2&\leq C|\nabla^{n+1}\tf|^2+C\ar^{-n-3}\exp(-(n+3)t),\\
|\nabla^{n}A|^2&\leq C|\nabla^{n}\tf|^2+C\ar^{-n-2}\exp(-(n+2)t),
\end{align*}
and
$$|\nabla^n M|^{2}\leq C\exp(-3t)|\nabla^{n}\tf|^2+C\ar^{-n-2}\exp(-(n+3)t).$$

Looking at the evolution equation of $|\nabla^n \tf|^2$, we see that we can find $\varepsilon>0$ and a constant $C$ such that
 \begin{multline*}
\frac{d|\nabla^n \tf|^2}{dt}\leq\frac{\Delta|\nabla^n \tf|^2}{H^2}-((n+2)-C\exp(-\varepsilon t))|\nabla^n
\tf|^2\\
+C\ar^{-n-2}\exp(-(n+2+\varepsilon)t)
\end{multline*}
and the maximum principle implies the desired result.
\end{proof}

In what follows, $C$ continues to denote any constant with dependence
$$C=C((Q_j)_{j\in\N},\varepsilon_0,\delta_0,\overline r_0-\underline r_0,m).$$
One immediate consequence of this lemma is that if we denote by $\nabla_0$ the connection determined by  $g_0$ (the round metric on $S^2$), then for every $n\geq 0$
$$|\nabla^n_0 f_t|\leq C$$
for some constant $C$. Moreover,
$$\frac{d \hat r_t}{dt}=\frac{\sinh \hat r_t}{2\cosh \hat r_t}$$
and thus, combining Lemma \ref{hearts} with Lemma \ref{so}, we have
\begin{align*}\left|\frac{d f_t}{dt}\right|&=\left|\frac{1}{H}-\frac{\sinh \hat r_t}{2\cosh \hat r_t}+(\langle\partial_r,\nu\rangle-1)H^{-1}\right| \\
&\leq C (|H-2|+\exp(-2\underline r_0-2t)+|\langle\partial_r,\nu\rangle-1|)\\
&\leq C\left(\sup_{\Sigma_0}(|H^2-4|+\n^2+|\langle\partial_r,\nu\rangle-1|)+\exp(-2\underline r_0)\right)\exp(-2t),
\end{align*}
for some other constant $C$. As a result, we get that the functions $f_t$ converge to a smooth function $f_{\infty}$ on $S^2$ 
 and so this proves Theorem \ref{imcf} (iii).

We will now argue that for all integers $k\geq 1$ and $n\geq 0$ there is a constant $C$ such that
$$\ar^n|\partial_t^k\, \nabla^n f_t|^2\leq C\exp(-nt)\quad\mbox{for n} \geq 1,\quad |\partial_t^k\, f_t|\leq C\exp(-t),$$
and $$\ar^{n+2}|\partial_t^k\, \nabla^n A|^2\leq C \exp(-(n+2)t).$$
This estimates finish the proof of the theorem.

We start with the case $k=1$.
Using normal coordinates, we have that
\begin{align*}
\langle \partial_t \nabla f_t, \partial_i\rangle &= \partial_t(\partial_i f_t)-\langle\nabla f_t, D_{\partial_t}\partial_i \rangle\\
&=\partial_i(\langle\partial_r,\nu\rangle)H^{-1}-\langle\partial_r,\nu\rangle \langle \nabla H, \partial_i\rangle H^{-2}-A(\nabla f_t, \partial_i)H^{-1}\\
&= -\frac{\phi'}{\phi}\langle\partial_r,\nu\rangle \langle \nabla f_t,\partial_i\rangle H^{-1}-\langle\partial_r,\nu\rangle \langle \nabla H, \partial_i\rangle H^{-2}
\end{align*}
and this implies that
$$\ar|\partial_t \, \nabla f_t|^2\leq C\exp(-t).$$
The same type of computations shows that for every $n\geq 1$ we can find $C$ such that
$$\ar^n|\partial_t \, \nabla^n f_t|^2\leq C\exp(-nt)$$
This implies that, for each $n\geq 1$,
$$\ar^{n+2}|\partial_t\, \nabla^n A|^2\leq C \exp(-(n+2)t)$$
for some constant $C$.
Having this estimates one can then show that
$$|\partial_t^2\, f_t|\leq C\exp(-t)$$
and, for each $n\geq 1$, 
$$\ar^n|\partial_t^2\, \nabla^n f_t|^2\leq C\exp(-nt).$$
Repeating this process gives the desired estimates.

\end{proof}

\section{A modified Shi-Tam flow}\label{stf}

In this section $\Sigma_0$  denotes a sphere satisfying hypothesis (H) and $(\Sigma_t)_{t\geq 0}$ is a solution to \imc flow for which Theorem \ref{imcf} holds.  Consider the manifold $$N:=\bigcup_{t\geq 0}\Sigma_t$$ where the metric $g_m$ can be written as
$$g_m=\frac{dt^2}{H^2}+g_t.$$
The metric $\bar g$ is defined to be
$$\bar g:=\frac{u^2}{H^2}dt^2+g_t,$$
where function $u$ satisfies \eqref{equacao}.

\begin{lemm}\label{yo}
  The metric $\bar g$ has $R(\bar g)=-6.$ 
\end{lemm}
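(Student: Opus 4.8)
The plan is to use the standard expression for the scalar curvature of a metric written, over a foliation $\{\Sigma_t\}$, in the form $\varphi^2\,dt^2+g_t$ where $g_t$ is the induced metric on the leaf $\Sigma_t$. If $\bar\nu=\varphi^{-1}\partial_t$ denotes the unit normal, $\bar A_{ij}=\tfrac1{2\varphi}\partial_t g_{ij}$ the second fundamental form of the leaves, and $\bar H=g_t^{ij}\bar A_{ij}$ its trace, then
\begin{equation}\label{scalarformula}
R(\varphi^2\,dt^2+g_t)=R_t-2\bar\nu(\bar H)-|\bar A|^2-\bar H^2-\frac{2}{\varphi}\Delta_t\varphi ,
\end{equation}
where $R_t$ and $\Delta_t$ are the scalar curvature and Laplacian of $g_t$. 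This follows from the twice--contracted Gauss equation combined with the Riccati (focusing) equation for the normal flow (this is the computation underlying \cite{shi-tam}); I would derive it in a couple of lines and pin down the sign and normalization conventions once and for all by checking that it returns $-6$ on $\H^3=dr^2+\sinh^2r\,g_0$.

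I would then apply \eqref{scalarformula} to the two metrics in question, which are defined over the same foliation $\{\Sigma_t\}$ with the same leaf metrics $g_t$. For the ambient Anti--de Sitter--Schwarzschild metric $g_m=H^{-2}\,dt^2+g_t$ one has $\varphi=H^{-1}$, hence $\bar\nu=H\partial_t=\nu$, and since \imc flow satisfies $\partial_t g_{ij}=\tfrac2HA_{ij}$ also $\bar A=A$ and $\bar H=H$. Because $R(g_m)=-6$, \eqref{scalarformula} becomes the pointwise identity
\begin{equation}\label{backgroundid}
R_t+6-2H\Delta_t H^{-1}=2H\,\partial_tH+|A|^2+H^2 ,
\end{equation}
whose left-hand side is precisely the combination occurring in \eqref{equacao}. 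For $\bar g=u^2H^{-2}\,dt^2+g_t$ one has $\varphi=u/H$, hence $\bar\nu=\nu/u$, $\bar A=A/u$, $\bar H=H/u$, and since the leaf metric $g_t$ is unchanged, $|\bar A|^2=|A|^2/u^2$ and $\bar H^2=H^2/u^2$. Subtracting the two instances of \eqref{scalarformula}, expanding $\partial_t(H/u)$ and $\Delta_t(u/H)$ by the product rule, and using \eqref{backgroundid} to substitute for $2H\partial_tH+|A|^2+H^2$, all the intrinsic and extrinsic curvature terms of $\Sigma_t$ collect into
\[
u^3\bigl(R(\bar g)+6\bigr)=(u^3-u)\bigl(R_t+6-2H\Delta_t H^{-1}\bigr)+2H^2\partial_tu-2u^2\Delta_tu-4u^2H\langle\nabla u,\nabla H^{-1}\rangle .
\]
The right-hand side is exactly \eqref{equacao} moved to one side; since $u$ is chosen to solve \eqref{equacao}, it vanishes, and therefore $R(\bar g)=-6$.

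The argument is entirely computational — no estimates are needed, smoothness of $u$ on $N$ (hence of $\bar g$) being supplied by Theorem \ref{shiflow}, and the boundary condition $u_{\Sigma_0}=H(\Sigma_0)/2$ playing no role in this identity. The only place requiring genuine care is the sign and normalization bookkeeping in \eqref{scalarformula} and the consistent use of $\partial_t g_{ij}=\tfrac2HA_{ij}$, which is what lets $\partial_tH$, $R_t$ and $|A|^2$ be eliminated through \eqref{backgroundid} rather than computed. I expect that to be essentially the only obstacle.
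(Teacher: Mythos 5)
Your proof is correct and follows essentially the same route as the paper: both rest on the scalar curvature formula for metrics of the form $\varphi^2\,dt^2+g_t$ (Gauss plus Riccati), applied once with $\varphi=H^{-1}$ and once with $\varphi=u/H$. The paper packages that formula as Shi--Tam's identity~(1.10) and passes through the intermediate metric $dt^2+g_t$, whereas you derive it directly and subtract the two instances; this is a bookkeeping difference, not a different argument.
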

\begin{proof}
    The mean curvature and the exterior normal vector of $\Sigma_t$ computed with respect to $\bar g$ equal
    $$\bar H(\Sigma_t)=H(\Sigma_t)/u\quad\mbox{and}\quad\bar\nu=\nu/u$$
    respectively. Thus $$\frac{\bar \nu}{\bar H}=\frac{\nu}{H}$$
    and this implies that $(\Sigma_t)_{t\geq 0}$ is indeed a solution to \imc flow for the new metric with
    $\bar H(\Sigma_0)=2.$

    We now check that the scalar curvature of $\bar g$ is $-6$.  According to formula $(1.10)$ of \cite{shi-tam}, given metrics
    $$h_0:=dt^2+g_{t}\quad\mbox{and}\quad h_1=v^2dt^2+g_t,$$
     the scalar curvature $R^0$ of $h_0$ and $R^1$ of $h_1$ are related by
    \begin{equation}\label{tom}
    H^0\frac{\partial v}{\partial t}=v^2\Delta_t v+\frac{1}{2}(v-v^3)R_t-\frac{1}{2}uR^0+\frac{u^3}{2}R^1,
     \end{equation}
    where $H^0$ denotes the mean curvature of $\Sigma_t$ with respect to $h_0$.

    Let $g_0$ be the metric $dt^2+g_t$. Because the scalar
    curvature of $g_m$ is $-6$, we obtain from   combining \eqref{tom} (setting $v=H^{-1}$) both with Gauss equations and with
    $$\frac{dH^{-1}}{dt}=\frac{\Delta H^{-1}}{H^2}+\frac{|A|^2+Rc(\nu,\nu)}{H^{3}}$$
   that the scalar curvature of $g_0$ is given by
    $$R(g_0)=R_t-1-\frac{|A|^2}{H^2}.$$
    Consider the function $v:=u/H$.  Using \eqref{tom} with $h_0=g_0$ and $h_1=\bar g$,  the condition that $R(\bar
    g)=-6$ is equivalent to
    $$\frac{\partial v}{\partial t}=v^2\Delta_t v+\frac{1}{2}(v-v^3)R_t-\frac{1}{2}vR(g_0)-3v^3.$$  The
    evolution equation for $u$ follows from the above equation, Gauss equations, and the evolution equation for $H^{-1}.$
  
\end{proof}
Using the identification of $\Sigma_t$ with $S^2$ via
$$\Sigma_t=\{(\hat r_t+f_t(\theta),\theta)\,|\,\theta \in S^2\},$$
the function $u_t$ can be identified with a function on $S^2$ which we still denote by $u_t$.  Recall that the normalized metrics $\hat g_t$ (defined on Lemma \ref{mao}) converge to a
 smooth metric  on $S^2$. The main purpose of this section is to prove

\begin{thm}\label{shiflow}Assume that $\Sigma_0$ satisfies $H(\Sigma_0)>0$ and that on $\Sigma_t$ we have
$$R_t+6-2H\Delta_t H^{-1}>0$$
for all $t$.

Equation
\eqref{equacao} admits a smooth solution $u$ with initial condition $u_{\Sigma_0}=H(\Sigma_0)/2$ and satisfying
the following properties.
\begin{enumerate}
\item[(i)]If we denote by $u_t$ the restriction of $u$ to $\Sigma_t$, then the functions
$$w_t:=2\exp(3t/2)|\Sigma_0|(u_t-1)/(4\pi)$$
converge smoothly to a  function $w_{\infty}$ defined on $S^2$.

\item[(ii)] For every integer $n$ and $k$ we can find
$$\Lambda=\Lambda((Q_j)_{j\in\N},\varepsilon_0,\delta_0,\overline r_0,\underline r_0,m)$$
such that
$$|\nabla ^n w_t|^2\leq \Lambda\exp(-nt)\quad\mbox{and}\quad |\partial_t^k \nabla^n w_t|\leq \Lambda\exp(-(n+2)t);$$

 \item[(iii)] The metric $\bar g$ is asymptotically hyperbolic. More precisely, we can find a coordinate system  $(s,\theta)$ and a symmetric 2-tensor $Q$ such that
\begin{align*}\bar g & =ds^2+\sinh^2 s g_0+\left(\frac{m+(|\Sigma_0|/(4\pi))^{1/2}\exp(3f_{\infty})w_{\infty}}{3\sinh s}\right)g_0+Q
\end{align*}
and
 $$\lvert Q\rvert+\lvert D Q\rvert +\lvert D^2 Q\rvert+\lvert D^3 Q\rvert\leq \Lambda\exp(-4r) $$
for some $\Lambda=\Lambda((Q_j)_{j\in\N},\varepsilon_0,\delta_0,\overline r_0,\underline r_0,m)$.
\end{enumerate}
\end{thm}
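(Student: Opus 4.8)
The plan is to treat equation~\eqref{equacao} as a quasilinear parabolic equation for $u$ on $N$ whose linearization at the constant $u\equiv1$, after passing to the rescaled unknown $w_t=2\exp(3t/2)|\Sigma_0|(u_t-1)/(4\pi)$, is of the form $H^2\partial_tw=\Delta_tw+(\text{lower order})$, and then to run a maximum--principle bootstrap parallel to the one in Section~\ref{long}. Since the coefficient of $\Delta_tu$ in \eqref{equacao} is $2u^2$ and that of $\partial_tu$ is $2H^2>0$, the equation is uniformly parabolic whenever $u$ stays in a compact subset of $(0,\infty)$, so short--time existence with the datum $u_{\Sigma_0}=H(\Sigma_0)/2$ (close to $1$ by hypothesis (H) and Proposition~\ref{nelsa}) is standard. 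Writing $R_t+6-2H\Delta_tH^{-1}=6+\rho_t$, Theorem~\ref{imcf} together with $R_t=2K_t$ for a surface gives $K_t=O(|\Sigma_t|^{-1})$, $H\to2$, $\Delta_tH^{-1}\to0$, hence $\rho_t\to0$; the hypothesis $6+\rho_t>0$ is precisely what makes $u\equiv1$ a linearly stable equilibrium, and since $2H^2\to8$ the decay rate of $u-1$ is $3/2$, which is the normalization built into $w_t$.

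For the a priori $C^0$ bound I would first apply the maximum principle directly to $u$: where $u>1$ the zeroth order term $(u-u^3)(6+\rho_t)$ is negative and where $0<u<1$ it is positive, so $u$ is trapped in $[1-\delta,1+\delta]$ with $\delta=\sup_{\Sigma_0}|u-1|$ small, and in particular $u$ never degenerates. Substituting $u=1+\tfrac{2\pi}{|\Sigma_0|}\exp(-3t/2)w$ into \eqref{equacao} and using $H^2=4+O(|\Sigma_0|^{-1}\exp(-t))$ and $\rho_t=O(|\Sigma_0|^{-1}\exp(-t))$, the constant multiples of $w$ arising from the $\tfrac32$--rescaling on the left and from the factor $6$ in $(u-u^3)(6+\rho_t)$ on the right cancel, leaving
\begin{equation*}
H^2\partial_tw=\Delta_tw+\tfrac32(H^2-4)w-\rho_tw+\mathcal E,
\end{equation*}
where $\Delta_t=\tfrac{4\pi}{|\Sigma_t|}\widehat\Delta_t$ scales like $\exp(-t)$ and $\mathcal E$ gathers the $\langle\nabla w,\nabla H^{-1}\rangle$, quadratic--in--$w$, and $O(\exp(-3t/2))$ corrections, all controlled through Theorem~\ref{imcf}. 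At a spatial maximum of $w$ one has $\Delta_tw\le0$ and $\nabla w=0$, so a Gronwall--type argument (the zeroth order coefficient $\tfrac32(H^2-4)-\rho_t$ being integrable in $t$) yields $\sup_{\Sigma_t}|w_t|\le\Lambda$, that is $|u_t-1|\le\Lambda|\Sigma_0|^{-1}\exp(-3t/2)$. For the higher--order estimates (ii) I would commute $\nabla$ and $\partial_t$ through this equation and feed in $|\Sigma_0|^{n+2}|\nabla^nA|^2\le C\exp(-(n+2)t)$, $|\Sigma_0|^n|\nabla^nf_t|^2\le C\exp(-nt)$ and $|\nabla_0^nf_t|\le C$ from Theorem~\ref{imcf}, exactly as in the proof of Lemma~\ref{national}, obtaining inductively $|\nabla^nw_t|^2\le\Lambda\exp(-nt)$ and $|\partial_t^k\nabla^nw_t|\le\Lambda\exp(-(n+2)t)$ (here $\Lambda$ may depend on $\underline r_0$, so the $\underline r_0$--independent bookkeeping of Section~\ref{long} is not needed); since $|\partial_tw_t|$ is integrable in $t$, $w_t$ and all its spatial derivatives converge smoothly to a limit $w_\infty$ on $S^2$, which is (i), and the uniform bounds give long--time existence.

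For (iii), starting from $\bar g=(u^2/H^2)dt^2+g_t$ I would introduce the radial coordinate $s$ by $\partial s/\partial t=u/H$ along the flow lines, so that $\bar g=ds^2+g_t$ and $s$ is $\bar g$--distance to $\Sigma_0$. By Theorem~\ref{imcf}~(iii)--(iv), $g_t=\sinh^2(\hat r_t+f_t)g_0+O(\exp(-r))$ with $|\Sigma_t|=4\pi\sinh^2\hat r_t$, while $g_m=(1/H^2)dt^2+g_t$ is Anti--de Sitter--Schwarzschild with the known expansion $dr^2+(\sinh^2r+m/(3\sinh r))g_0+O(\exp(-5r))$; comparing the two radial coordinates and using $u=1+\tfrac{2\pi}{|\Sigma_0|}\exp(-3t/2)w_t$ gives $s=r+\mathrm{const}+O(\exp(-3r))$. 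Expanding $\sinh^2s$ and collecting the terms of order $\exp(-3r)$ — the $m/(3\sinh r)$ term already present in $g_m$, the correction from $u-1\sim\tfrac{2\pi}{|\Sigma_0|}\exp(-3t/2)w_\infty$, and the conformal weight $\exp(3f_\infty)$ produced by $g_t$ via Proposition~\ref{nelsa} — yields the stated expansion with mass $2$--tensor $(m+(|\Sigma_0|/(4\pi))^{1/2}\exp(3f_\infty)w_\infty)g_0$, and the bounds on $Q,DQ,D^2Q,D^3Q$ follow from (ii) and Theorem~\ref{imcf}~(iv).

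I expect the higher--order estimates of (ii) to be the most laborious part (the same $\ast$--tensor bookkeeping as in Lemma~\ref{national}), but the genuinely delicate point is (iii): one must check that the correction the flow produces in $g_m$ is \emph{exactly} of order $\exp(-3r)$ — not larger, which would destroy asymptotic hyperbolicity — and that it reassembles into the claimed mass $2$--tensor. This relies on the precise cancellation of leading terms in the $w$--equation above together with the sharp exponential rates of Theorem~\ref{imcf}; moreover, controlling $Q$ up to three derivatives is exactly why the full strength of (ii) is required.
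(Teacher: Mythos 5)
Your proposal is essentially correct, but the route you take for parts (i) and (ii) differs genuinely from the paper's. For the $C^0$ bound on $u$ (equivalently $w_t$), the paper does not run a direct maximum-principle/Gronwall argument on the rescaled $w$; it instead invokes the ODE comparison of Shi--Tam (their Lemma 2.2, recalled here via $\gamma^{\pm}(t)$) to trap $u_t$ between explicit barriers $\gamma^{-}\leq u_t\leq\gamma^{+}$, then uses the rate $|h^{\pm}(t)-3/4|\leq\Lambda\exp(-t)$ coming from Theorem~\ref{imcf} to read off $|w_t|\leq\Lambda$. Your substitution $u=1+\tfrac{2\pi}{|\Sigma_0|}\exp(-3t/2)w$, the cancellation of the leading $-12w$ term against $-3H^2w$, and the observation that the remaining zeroth-order coefficient $3(H^2-4)w-2\rho_tw+\cdots$ is integrable in $t$, lead to the same conclusion; the Riccati-type quadratic correction is harmless because its coefficient is $O(|\Sigma_0|^{-1}\exp(-3t/2))$ and its total integral is small for $\underline r_0$ large, which is exactly the regime in which the theorem is applied. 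For the higher-order estimates the divergence is more substantial: rather than commuting $\nabla$ and $\partial_t$ through the $w$-equation in the style of Lemma~\ref{national}, the paper performs the change of variable $s=-4\pi|\Sigma_0|^{-1}\exp(-t)$, rewriting the equation as a quasilinear parabolic equation \eqref{portis} on the \emph{finite} interval $[-4\pi|\Sigma_0|^{-1},0)$ with uniformly bounded coefficients (the term \eqref{sr} is explicitly decomposed and bounded using Theorem~\ref{imcf}); this lets one apply Lieberman's $C^{0,\alpha}$ theory and Schauder estimates off the shelf and then bootstrap, which buys a considerably lighter proof than the $\ast$-tensor bookkeeping you are proposing. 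Your observation that the $\underline r_0$-independent constants of Section~\ref{long} are not needed here is correct, and your treatment of (iii) is a mild variant of the paper's: the paper first replaces $\tfrac{u^2-1}{H^2}dt^2$ by $2(u-1)\,dr^2$ plus an $O(\exp(-4r))$ error (using $H\to2$, Theorem~\ref{imcf} (iv), and the decay of $|\nabla r|$), identifies the coefficient $2(u-1)\sinh^3 r$ with $\exp(3f_\infty)w_\infty|\Sigma_0|^{3/2}(4\pi)^{-3/2}$ up to $O(\exp(-r))$, and then sets $s:=r-\tfrac{4}{3}h\exp(-3r)$ to absorb the $dr^2$-correction; your $s$ defined by $\partial s/\partial t=u/H$ is asymptotically the same coordinate, and assembling the mass $2$-tensor requires exactly the sharp rates from (ii) and Theorem~\ref{imcf}, as you correctly anticipate.
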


Except for property $(iii)$, this theorem was essentially proven in \cite[Theorem 2.1]{mutao} when the deformation vector of
the foliation $(\Sigma_t)_{t\geq 0}$ equals the unit normal vector. In light of Theorem \ref{imcf} the same techniques
apply with no modification (see also \cite{shi-tam}). Nonetheless, we need to make sure that some estimates are independent of $\underline r_0$ and so we  sketch its proof. During the proof 
 $\Lambda$ will denote any constant with dependence
$$\Lambda=\Lambda((Q_j)_{j\in\N},\varepsilon_0,\delta_0,\overline r_0,\underline r_0,m).$$

\begin{proof}
Set
$$h^{+}(t)=\sup_{\Sigma_t}\left(\frac{R_t+6-2H\Delta_t H^{-1}}{2H^2}\right)$$
and $h^{-}(t)=h^{+}(t)$ if $\inf u_0\leq1$ or, in case $\inf u_0>1,$
$$h^{-}(t)=\inf_{\Sigma_t}\left(\frac{R_t+6-2H\Delta_t H^{-1}}{2H^2}\right).$$
Moreover, define 
$$W^{+}=1-\left(\sup_{\Sigma_0} u_0\right)^{-2},\quad W^{-}=1-\left(\inf_{\Sigma_0} u_0\right)^{-2},$$
and
$$\gamma^{\pm}(t)=\left (1-W^{\pm}\exp\left(-\int_0^t 2h^{\pm}(s) ds \right)\right)^{-1/2}.$$
 From  \cite[Lemma 2.2]{shi-tam} (see also \cite[Section 2.2]{mutao}) we have that comparison with the ODE 
$$\frac{d\gamma}{dt}=h^{\pm}(t)(\gamma-\gamma^3),$$
 implies \begin{equation}\label{power}
 \gamma^- (t)\leq u_t\leq\gamma^+(t)
 \end{equation}
  while the solution exists.  Moreover, we know from 
 from Theorem \ref{imcf} that
$$|h^{\pm}(t)-3/4|\leq \Lambda\exp(-t)\quad\mbox{and}\quad |w_0|\leq \Lambda.$$
for some constant $\Lambda$. Therefore, the inequalities in \eqref{power} imply that, while the solution exists, 
$$|w_t|\leq \Lambda$$ 
 for some other constant $\Lambda$.

Performing the change of variable
    $$s=-4\pi|\Sigma_t|^{-1}=-4\pi|\Sigma_0|^{-1}\exp(-t),$$
 the evolution equation for $w_t$ becomes (see also \cite[Theorem 2.1]{mutao})
\begin{multline}\label{portis}
\frac{d w_s}{ds}=\frac{u^2}{H^2}\widehat \Delta_t w_s+2u^2H^{-1}\hat g_t\left(\widehat \nabla w_t,\widehat
\nabla H^{-1}\right)\\+ w_s (4\pi)^{-1}|\Sigma_t|\left(\frac{3}{2}+u(u+1)\left(\frac{\Delta_t
H^{-1}}{H}-\frac{R_t+6}{2H^2}\right)\right),
\end{multline}
where the operators $\widehat \Delta_t$ and $\widehat \nabla$ are computed with respect to the normalized metric $\hat g_t$ and the range os $s$ is
$-4\pi\ar^{-1}\leq s<0.$

In order to use the standard theory for quasilienar parabolic equations, we need to make some remarks regarding the last term on the right-hand side of equation \eqref{portis}. Direct computation shows that
$$u(u+1)=2+3\sqrt\frac{\ar}{16\pi}(-s)^{3/2}w_s-\frac{\ar}{16\pi}s^3w_s^2.$$
Thus the term
\begin{equation}\label{sr}
(4\pi)^{-1}|\Sigma_t|\left(\frac{3}{2}+u(u+1)\left(\frac{\Delta_t
H^{-1}}{H}-\frac{R_t+6}{2H^2}\right)\right)
\end{equation}
can be decomposed as
$$-\frac{3|\Sigma_0|}{164\pi}s^2w^2_s-9\sqrt{\frac{|\Sigma_0|}{164\pi}}\sqrt {-s} w_s+u(u+1)F_t,$$
where
$$F_t=(4\pi)^{-1}|\Sigma_t|\left(\frac{\Delta_t H^{-1}}{H}+\frac{6(H^2-4)-4R_t}{8H^2}\right).$$
Therefore, we obtain from Theorem \ref{imcf} that the term in \eqref{sr} is bounded by some constant $\Lambda$.

Standard theory for quasilinear parabolic
equations \cite[Section VI, Theorem 6.33]{lieb} gives a uniform $C^{0,\alpha}$-bound in space-time for $w_s$, i.e., for all $\theta, \theta' \in S^2$ and $ -4\pi\ar^{-1}\leq s,s'<0$
$$\frac{|w_s(\theta)-w_s(\theta')|}{\mbox{dist}(\theta,\theta')^{2\alpha}}+\frac{|w_s(\theta)-w_{s'}(\theta)|}{|s-s'|^{\alpha}}\leq \Lambda$$
for some constant $\Lambda$. 

The term in \eqref{sr} has a uniform $C^{0,\alpha}$-bound and so 
standard Schauder estimates imply that $\widehat \nabla w_s$ and  $\widehat \nabla^2 w_s$ are uniformly $C^{0,\alpha}$-bounded in space-time.
Bootstrapping implies the existence of a solution $w_s$ for all $s$ with   
$$|\widehat \nabla ^n w_s|+|\partial_s \widehat \nabla^n w_s|\leq \Lambda$$  
for every integer $n$. Rewriting the equation for $w_t$ in terms of the variable $t$ and differentiating it with respect to time we obtain that, for every integer $n$ and $k$,
$$|\nabla ^n w_t|^2\leq \Lambda \exp(-nt)\quad\mbox{and}\quad |\partial_t^k \nabla^n w_t|^2\leq \Lambda\exp(-(n+2)t).$$
As a result,  $w_t$ converges smoothly to a smooth function $w_{\infty}$ defined on $S^2$. 

Finally, we show that the metric $\bar g$ satisfies the definition of asymptotic hyperbolicity given in the
Introduction. The manifold $N$ defined in the beginning of this section is diffeomorphic to
$S^2\times [0,+\infty)$ and thus, besides  polar coordinates $(r,\theta)$, admits also coordinates $(t,\theta)$ where $r=f_t+\hat r_t.$
In what follows we will use these coordinate systems, Theorem
\ref{imcf}, and the previous estimates for the function $u$ without further mention. Let
$$h:=2w_{\infty}\exp(3f_{\infty})|\Sigma_0|^{3/2}(4\pi)^{-3/2}$$
 and denote by $Q$ any $2$-tensor that satisfies
 $$\lvert Q\rvert+\lvert D Q\rvert +\lvert D^2 Q\rvert+\lvert D^3 Q\rvert=O(\exp(-4r)). $$

Then
\begin{align*}
    \bar g&=g_m+\frac{u^2-1}{H^2}dt^2=g_m+\frac{u-1}{2}dt^2+Q\\
    &=g_m+2(u-1)dr^2+Q.
\end{align*}
Due to the fact that
$$|\Sigma_t|=4\pi\sinh^2(r-f_t),$$
we get that
\begin{align*}
    \bar g&=g_m+\frac{2(u-1)\exp(3t/2)|\Sigma_0|^{3/2}}{(4\pi)^{3/2}\sinh^3(r-f_t)}dr^2
    +Q\\
    &=g_m+\frac{16h}{\exp(3r)}dr^2+Q\\
    &=\left(1+4h\exp(-3r)\right)^2dr^2+(\sinh^2 r+m/(3\sinh r))g_0+Q.
\end{align*}
Thus, if we set
$$s:=r-4/3h\exp(-3r),$$
we obtain that
\begin{align*}\bar g & =ds^2+(\sinh^2 s+(h+m)/(3\sinh s))g_0+Q
\end{align*}
and this implies that $\bar g$ is asymptotic hyperbolic if one uses the coordinate system $(s,\theta)$.
\end{proof}

\newpage

\bibliographystyle{amsbook}

\vspace{20mm}

\end{document}